\PassOptionsToPackage{dvipsnames}{xcolor} 
\documentclass[11pt,letter]{article}
\usepackage{amssymb,amsmath,amsthm,multirow,color,xcolor,cancel,bbm,tikz,mathrsfs,hyperref,authblk,stmaryrd,dsfont,authblk}
\usepackage[textheight=24cm,textwidth=16cm]{geometry}

\hypersetup{
    colorlinks=true,
    linkcolor=blue,
    citecolor=cyan,
    filecolor=magenta,      
    urlcolor=green,
    pdftitle={Overleaf Example},
    pdfpagemode=FullScreen,
    }

\newtheorem{definition}{Definition}
\newtheorem{proposition}{Proposition\setcounter{claimcounter}{0}}
\newtheorem{lemma}{Lemma\setcounter{claimcounter}{0}}
\newtheorem{theorem}{Theorem\setcounter{claimcounter}{0}}
\newtheorem{corollary}{Corollary}

\newtheorem{example}{Example}

\newcounter{claimcounter}
\newtheorem*{claim*}{{\it Claim}}

\newcommand{\EM}[1]{{\it\textcolor{Maroon}{#1}}}
\newcommand{\EMM}[1]{{\textcolor{Maroon}{#1}}}

\def\lcm{\mathrm{lcm}}

\def\Mult{\mathrm{Mult}}

\def\Div{\mathrm{Div}}
\def\div{\mathrm{div}}
\def\sol{\mathrm{sol}}
\def\Sol{\mathrm{Sol}}
\def\red{\mathrm{red}}
\def\irred{\mathrm{irred}}
\def\fact{\mathrm{fact}}
\def\L{\mathcal{L}}

\def\F{\mathbb{F}}
\def\S{\mathbb{S}}
\def\N{\mathbb{N}}

\def\d{\oslash}
\def\dd{\boxslash}
\def\m{\otimes}

\title{Dividing sum of cycles in the semiring of functional digraphs}

\author[1]{Florian Bridoux}
\author[1]{Christophe Crespelle}
\author[2]{Thi Ha Duong Phan}
\author[1]{Adrien Richard}

\affil[1]{\small Universit\'e C\^ote d’Azur, CNRS, I3S, Sophia Antipolis, France 
\tt{\small\{florian.bridoux,christophe.crespelle,adrien.richard\}@univ-cotedazur.fr}}

\affil[2]{\small ICRTM - Institute of Mathematics, Vietnam Academy of Science and Technology, Vietnam
\tt{\small phanhaduong@math.ac.vn}}


\begin{document}

\maketitle

\begin{abstract}
Functional digraphs are unlabelled finite digraphs where each vertex has exactly one out-neighbor. They are isomorphic classes of  finite discrete-time dynamical systems. Endowed with the direct sum and product, functional digraphs form a semiring with an interesting multiplicative structure. For instance, we do not know if the following division problem can be solved in polynomial time: given two functional digraphs $A$ and $B$, does $A$ divide $B$? That $A$ divides $B$ means that there exists a functional digraph $X$ such that $AX$ is isomorphic to $B$, and many such $X$ can exist. We can thus ask for the number of solutions $X$. In this paper, we focus on the case where $B$ is a sum of cycles (a disjoint union of cycles, corresponding to the limit behavior of finite discrete-time dynamical systems). There is then a naïve sub-exponential algorithm to compute the non-isomorphic solutions $X$, and our main result is an improvement of this algorithm which has the property to be polynomial when $A$ is fixed. It uses a divide-and-conquer technique that should be useful for further developments on the division problem.   

\paragraph{Keywords:} Finite Dynamical Systems, Functional digraphs, Graph direct product, Graph factorization.
\end{abstract}

\section{Introduction}

A deterministic, finite, discrete-time dynamical system is a function from a finite set (of states or configurations) to itself. Equivalently, this is a \EM{functional digraph}, that is, a finite directed graph where each vertex has a unique out-neighbor. In addition to being ubiquitous objects in discrete mathematics, such systems have many real-life applications: they are usual models for the dynamics of gene networks \cite{K69,T73,TK01,J02}, neural networks \cite{MP43,H82,G85}, reaction systems \cite{ehrenfeucht2007reaction}, social interactions \cite{PS83, GT83} and more \cite{TA90,GM90}. 

\medskip
A functional digraph $A$ basically contains two parts: the \EM{cyclic part}, which is the collection of the cycles of $A$ (which are vertex  disjoint), and the \EM{transient part}, which is obtained by deleting arcs in cycles, and which is a disjoint union of out-trees; see Figure \ref{fig:cyclic_transient}. From a dynamical point of view, the cyclic part is fundamental since it describes the asymptotic behavior of the system, and this is the part we focus on in this paper. 

\medskip
Furthermore, we consider functional digraphs up to isomorphism; two functional digraphs are isomorphic if there exists a one-to-one correspondence between their vertices that preserves the arcs (they are identical up to a relabeling of the vertices). This is motivated by the fact that many studied dynamical properties are invariant by isomorphism: number of fixed points, periodic points, limit cycles, lengths of limits cycles and so on; see e.g. \cite{FO89} in the context of random functional digraphs, and \cite{R19,G20} in the context of automata networks. An isomorphism class then corresponds to an unlabelled functional digraph, and we write \EM{$A=B$} to mean that $A$ is isomorphic to $B$. Since a planar embedding of a functional digraph can be obtained in linear time, testing if $A=B$ can be done in linear time \cite{hopcroft1974linear}.

\begin{figure}
\[
\begin{array}{cc}
\begin{array}{c}
\begin{tikzpicture}
\node[outer sep=0,inner sep=1] (a) at (180:0.5){{\small$1$}};
\node[outer sep=0,inner sep=1] (b) at (0:0.5){{\small$2$}};
\node[outer sep=0,inner sep=1] (1) at (30:1.2){{\small$3$}};
\node[outer sep=0,inner sep=1] (2) at (50:1.6){{\small$4$}};
\node[outer sep=0,inner sep=1] (3) at (30:1.9){{\small$5$}};
\node[outer sep=0,inner sep=1] (4) at (10:1.6){{\small$6$}};
\node[outer sep=0,inner sep=1] (5) at (165:1.2){{\small$7$}};
\node[outer sep=0,inner sep=1] (6) at (195:1.2){{\small$8$}};

\path[->,Red,thick]
(a) edge[bend left=60] (b)
(b) edge[bend left=60] (a)
;

\path[->,Green,thick]
(1) edge (b)
(2) edge (1)
(3) edge (1)
(4) edge (1)
(5) edge (a)
(6) edge (a)
;
\end{tikzpicture}
\end{array}
&
\begin{array}{c}
\begin{tikzpicture}
\node[outer sep=0,inner sep=1] (1) at (180:0.7){{\small$9$}};
\node[outer sep=0,inner sep=1] (2) at (60:0.7){{\small$10$}};
\node[outer sep=0,inner sep=1] (3) at (-60:0.7){{\small$11$}};
\node[outer sep=0,inner sep=1] (a) at (60:1.4){{\small$12$}};
\node[outer sep=0,inner sep=1] (b) at (70:2){{\small$13$}};
\node[outer sep=0,inner sep=1] (c) at (50:2){{\small$14$}};
\node[outer sep=0,inner sep=1] (d) at (160:1.3){{\small$15$}};

\path[->,Red,thick]
(1) edge[bend left=40] (2)
(2) edge[bend left=40] (3)
(3) edge[bend left=40] (1)
;

\path[->,Green,thick]
(a) edge (2)
(b) edge (a)
(c) edge (a)
(d) edge (1)
;

\end{tikzpicture}
\end{array}
\end{array}
\]
{\caption{\label{fig:cyclic_transient} A functional digraph, with cyclic part in red, and transient part in green.}}
\end{figure}

\medskip
There are two natural algebraic operations to obtain larger systems from smaller ones. Given two functional digraphs $A$ and $B$, the \EM{addition} \EM{$A+B$} is the disjoint union of $A$ and $B$, while the \EM{multiplication} \EM{$A\cdot B$} (or simply \EM{$AB$}) is the \EM{direct product} of $A$ and $B$: the vertex set of $AB$ is the Cartesian product of the vertex set of $A$ and the vertex set of $B$, and the out-neighbor of $(a,b)$ in $AB$ is $(a',b')$ where $a'$ is the out-neighbor of $a$ in $A$ and $b'$ is the out-neighbor of vertex $b$ in $B$; see Figure \ref{fig:product}. Hence $AB$ describes the parallel evolution of the dynamics described by $A$ and $B$. Endowed with these two operations, the set of functional digraphs forms a semiring \EM{$\F$}, introduced as such in~\cite{dennunzio2018polynomial}. The identity element for the addition is the empty functional digraph, while for the product it is the cycle of length one, denoted \EM{$C_1$}.

\begin{figure}
\[
\begin{array}{c}
\begin{tikzpicture}
\node[outer sep=1,inner sep=1] (a) at (180:0.5){\small{$a$}};
\node[outer sep=1,inner sep=1] (b) at (0:0.5){\small{$b$}};
\node[outer sep=1,inner sep=1] (c) at (140:1){\small{$c$}};
\path[->,thick]
(a) edge[Red,bend left=60] (b)
(b) edge[Red,bend left=60] (a)
(c) edge[Green] (a)
;
\end{tikzpicture}
\end{array}
\cdot
\begin{array}{c}
\begin{tikzpicture}
\node[outer sep=1,inner sep=1] (1) at (180:0.7){\small{$1$}};
\node[outer sep=1,inner sep=1] (2) at (60:0.7){\small{$2$}};
\node[outer sep=1,inner sep=1] (3) at (-60:0.7){\small{$3$}};
\node[outer sep=1,inner sep=1] (5) at (140:1.9){\small{$5$}};
\node[outer sep=1,inner sep=1] (4) at (150:1.2){\small{$4$}};
\path[->,thick]
(1) edge[Red,bend left=40] (2)
(2) edge[Red,bend left=40] (3)
(3) edge[Red,bend left=40] (1)
(4) edge[Green] (1)
(5) edge[Green] (4)
;
\end{tikzpicture}
\end{array}
=
\begin{array}{c}
\begin{tikzpicture}
\node[outer sep=1,inner sep=1.2] (a1) at (180:1){\small{$a$}\small{$1$}};
\node[outer sep=1,inner sep=1.2] (b2) at (120:1){\small{$b$}\small{$2$}};
\node[outer sep=1,inner sep=1.2] (a3) at (60:1){\small{$a$}\small{$3$}};
\node[outer sep=1,inner sep=1.2] (b1) at (0:1){\small{$b$}\small{$1$}};
\node[outer sep=1,inner sep=1.2] (a2) at (-60:1){\small{$a$}\small{$2$}};
\node[outer sep=1,inner sep=1.2] (b3) at (-120:1){\small{$b$}\small{$3$}};
\node[outer sep=1,inner sep=1.2] (b4) at (160:1.8){\small{$b$}\small{$4$}};
\node[outer sep=1,inner sep=1.2] (c3) at (180:1.8){\small{$c$}\small{$3$}};
\node[outer sep=1,inner sep=1.2] (c4) at (200:1.8){\small{$c$}\small{$4$}};
\node[outer sep=1,inner sep=1.2] (a5) at (150:2.6){\small{$a$}\small{$5$}};
\node[outer sep=1,inner sep=1.2] (c2) at (60:1.8){\small{$c$}\small{$2$}};
\node[outer sep=1,inner sep=1.2] (b5) at (20:2.2){\small{$b$}\small{$5$}};
\node[outer sep=1,inner sep=1.2] (a4) at (0:1.8){\small{$a$}\small{$4$}};
\node[outer sep=1,inner sep=1.2] (c5) at (-20:2.2){\small{$c$}\small{$5$}};
\node[outer sep=1,inner sep=1.2] (c1) at (-60:1.8){\small{$c$}\small{$1$}};

\path[->,thick]
(a1) edge[Red,bend left=20] (b2)
(b2) edge[Red,bend left=20] (a3)
(a3) edge[Red,bend left=20] (b1)
(b1) edge[Red,bend left=20] (a2)
(a2) edge[Red,bend left=20] (b3)
(b3) edge[Red,bend left=20] (a1)
(b4) edge[Green] (a1)
(c3) edge[Green] (a1)
(c4) edge[Green] (a1)
(a5) edge[Green] (b4)
(c2) edge[Green] (a3)
(a4) edge[Green] (b1)
(b5) edge[Green] (a4)
(c5) edge[Green] (a4)
(c1) edge[Green] (a2)
;
\end{tikzpicture}
\end{array}
\]
{\caption{\label{fig:product} Product of two functional digraphs.}}
\end{figure}

\medskip
The semiring $\F$ contains an isomorphic copy of $\N$ (by identifying each integer $n$ with the sum of $n$ copies of $C_1$). The set of sums of cycles \EM{$\S$}, which describes the cyclic parts of functional digraphs, is also a sub-semiring of $\F$. Because the multiplication of cycles involves the least common multiple operation,  the multiplicative structures of $\F$ and $\S$ have interesting properties, which differ from those of $\N$ (or the semiring of polynomials), and have been recently studied in \cite{DDFMP18,dennunzio2019solving,GMP20,riva2022factorisation,DFMR23,dore2024roots,dore2024decomposition,naquin2024factorisation,dennunzio2024note}, leading to many interesting open complexity problems.  

\medskip
To emphasize such differences, we need some definitions. Let us say that $X\in\F$ is \EM{irreducible} if, for all $A,B\in \F$, $X=AB$ implies $A=C_1$ or $B=C_1$. Let say that $A$ \EM{divides} $B$, \EM{$A\mid B$} in notation, if there is $X$ such that $AX=B$. Finally, let us say that $X$ is \EM{prime} if $X\neq C_1$ and, for all $A,B\in\F$, $X\mid AB$ implies $X\mid A$ or $X\mid B$. In $\N$, irreducibility and primeness are a same concept, but this is far from being true for $\F$: almost all functional digraphs are irreducible \cite{Dorigatti2017}, while Seifert \cite{seifert1971prime} proved, with technical arguments, that there is no prime functional digraph. Using only very simple arguments, we will prove that $\S$ share the same two properties. In practice, asking for irreducibility is very natural: given an observed dynamical system, does it correspond (up to isomorphism) to the parallel evolution of two smaller systems? Testing irreducibility is obviously in coNP, but this problem does not seem to have been studied in depth. 

\medskip
Another key point is that functional digraphs do not have a unique factorization into irreducibles (distinct from $C_1$). This is even true for sums of cycles. For instance, denoting \EM{$C_\ell$} the cycle of length $\ell$, the sum of cycles $C_2+C_2$ has two distinct irreducible factorizations, which are $C_2\cdot C_2$ and $C_2\cdot (C_1+C_1)$; see Figure \ref{fig:2C2}. Actually, we will prove that, for every $\epsilon>0$ and infinitely many $n$, there exists a sum of cycles with $n^2$ vertices which has at least 
\begin{equation}\label{eq:super_poly}
n^{2^{\frac{\ln n}{(1+\epsilon)\ln \ln n}}}
\end{equation}
irreducible factorizations, which is super-polynomial in $n$. Using a very rough argument, we will also show that every sum of cycles with $n$ vertices has at most $e^{O(\sqrt{n})}$ irreducible factorizations, which is sub-exponential, and we think that this upper-bound is far from being accurate. Here again, there is a lack of theoretical results concerning the complexity of finding an irreducible factorization. 

\begin{figure}
\[
\begin{array}{c}
\begin{tikzpicture}
\def\s{0.8}
\useasboundingbox (-0.1,-0.1) rectangle ({\s*3+0.1},0.2);
\node[inner sep=1] (1) at ({\s*0},0){\small $1$};
\node[inner sep=1] (2) at ({\s*1},0){\small $2$};
\node  at ({\s*1.5},0){$\cdot$};
\node[inner sep=1] (a) at ({\s*2},0){\small $a$};
\node[inner sep=1] (b) at ({\s*3},0){\small $b$};
\path[thick,->,draw,Red]
(1) edge[bend left=60] (2)
(2) edge[bend left=60] (1)
(a) edge[bend left=60] (b)
(b) edge[bend left=60] (a)
;
\end{tikzpicture}
\end{array}
~=~
\begin{array}{c}
\begin{tikzpicture}
\def\s{0.8}
\useasboundingbox (-0.15,-0.1) rectangle ({\s*3.5+0.15},0.2);
\node[inner sep=1] (1a) at ({\s*0},0){\small $1a$};
\node[inner sep=1] (2b) at ({\s*1},0){\small $2b$};
\node  at ({\s*1.75},0){\scriptsize $+$};
\node[inner sep=1] (1b) at ({\s*2.5},0){\small $1b$};
\node[inner sep=1] (2a) at ({\s*3.5},0){\small $2a$};
\path[thick,->,draw,Red]
(1a) edge[bend left=60] (2b)
(2b) edge[bend left=60] (1a)
(1b) edge[bend left=60] (2a)
(2a) edge[bend left=60] (1b)
;
\end{tikzpicture}
\end{array}
~=~
\begin{array}{c}
\begin{tikzpicture}
\def\s{0.8}
\useasboundingbox (-0.15,-0.1) rectangle ({\s*3.5+0.15},0.2);
\node[inner sep=1] (1a) at ({\s*0},0){\small $1a$};
\node[inner sep=1] (2b) at ({\s*1},0){\small $2a$};
\node  at ({\s*1.75},0){\scriptsize $+$};
\node[inner sep=1] (1b) at ({\s*2.5},0){\small $1b$};
\node[inner sep=1] (2a) at ({\s*3.5},0){\small $2b$};
\path[thick,->,draw,Red]
(1a) edge[bend left=60] (2b)
(2b) edge[bend left=60] (1a)
(1b) edge[bend left=60] (2a)
(2a) edge[bend left=60] (1b)
;
\end{tikzpicture}
\end{array}
~=~
\begin{array}{c}
\begin{tikzpicture}
\def\s{0.8}
\useasboundingbox (-0.1,-0.2) rectangle ({\s*4.33+0.1},0.3);
\node[inner sep=1] (1) at ({\s*0},0){\small $1$};
\node[inner sep=1] (2) at ({\s*1},0){\small $2$};
\node  at ({\s*1.33},0){$\cdot$};
\node  at ({\s*1.66},0){$\big($};
\node[inner sep=1] (a) at ({\s*2.35},0){\small $a$};
\node  at ({\s*3},0){\scriptsize$+$};
\node[inner sep=1] (b) at ({\s*3.55},0){\small $b$};
\node  at ({\s*4.33},0){$\big)$};
\path[thick,->,draw,Red]
(1) edge[bend left=60] (2)
(2) edge[bend left=60] (1)
;
\draw[->,thick,Red] (a.+112) .. controls ({\s*2.35-0.6},0.6) and ({\s*2.35+0.6},0.6) .. (a.+68);
\draw[->,thick,Red] (b.+112) .. controls ({\s*3.55-0.6},0.6) and ({\s*3.55+0.6},0.6) .. (b.+68);
\end{tikzpicture}
\end{array}
\]
{\caption{\label{fig:2C2} The two irreducible factorizations of $C_2+C_2$.}} 
\end{figure}

\medskip
As last general observation concerning $\S$, the division is not unique: given two sums of cycles $A,B$, the equation $AX=B$ can have several solutions $X$. For instance, Figure \ref{fig:2C2} shows that there exists two $X$ such that $C_2\cdot X=2C_2$, which are $X=C_2$ and $X=C_1+C_1$. Actually, we will prove that, for every $\epsilon>0$ and infinitely many $n$, there exists a sum of cycles $A$ with $n$ vertices and a sum of cycles $B$ with $n^2$ vertices such that the number of solutions $X$ to $AX=B$ is at least the expression given in \eqref{eq:super_poly}, super-polynomial in $n$.  

\medskip
From an algorithmic point of view, the \EM{division problem}, which consists, given $A,B$, to decide if $A$ divides $B$, is at the basis of many other problems \cite{riva2022factorisation,DFMR23,naquin2024factorisation,dennunzio2024note}. The main purpose of this paper is to study the complexity of this problem for sums of cycles. This restriction has already been considered in \cite{DFMR23} as an important step for solving polynomial equations over functional digraphs. On the other side, \cite{naquin2024factorisation} gives a polynomial algorithm to decide if $A$ divides $B$ when $B$ is a \EM{dendron}, that is, $B$ contains a unique cycle, of length $1$, so that $B$ consists of an out-tree plus a loop on the root. This result should be very useful to treat the transient part in the division problem. One may hope that efficient algorithms for the cyclic and transient parts could be combined to obtain an efficient algorithm for the general case. 

\medskip
Given two sum of cycles $A,B$, there is a simple sub-exponential algorithm to compute all the solutions $X$ of $AX=B$, that we call \EM{brute force approach}. It works as follows. Let $|A|$ and $|B|$ be the number of vertices in $A$ and $B$, respectively; $|A|+|B|$ is the size of the instance. If $AX=B$ then $X$ has $n=|B|/|A|$ vertices. Now remark that sums of cycles with $n$ vertices are in bijection with partitions of $n$: a sum of cycles $C_{\ell_1}+\cdots +C_{\ell_k}$ with $n$ vertices is completely described by the sequence $\ell_1,\dots,\ell_k$ of the length of its cycles, which form a partition of $n$; and conversely, the parts of a partition of $n$ describe the lengths of the cycles of a sum of cycles with $n$ vertices. For instance, $C_1+C_3$ corresponds to the partition $1+3$ of $4$. So to compute the solutions, we can enumerate the partitions of $n$, and check for each if the corresponding sum of cycles $X$ satisfies $AX=B$. This gives a sub-exponential algorithm. Indeed, the number of partitions of $n$ can be enumerated with linear-time delay and there are at most $e^{O(\sqrt{n})}$ such partitions, giving a total running time in $|B|e^{O(\sqrt{n})}$; details will be given in Section~\ref{sec:support}.

\medskip
Unfortunately, we were not able to find a faster algorithm, say running in~$|B|^d e^{O(n^\epsilon)}$ for some constants $d$ and $\epsilon<1/2$, to decide if $A$ divides $B$. That a polynomial algorithm exists is open, and \cite{dennunzio2024note} gives a positive answer under the strong condition that, in $A$ or $B$, all the cycles have the same length. 

\medskip
Here we design a general algorithm to compute the {\it number} of solutions which has the interesting property to be polynomial when $A$ is fixed. The precise statement, Theorem~\ref{thm:main} below, involves some definitions. The \EM{support} of a sum of cycles $A$ is the set \EM{$L(A)$} of positive integers $\ell$ such that $A$ contains $C_\ell$. Given $N\subseteq \N$, \EM{$\lcm N$} is the least common multiple of the integers in $N$, and \EM{$\div(n)$} is the number of divisors of~$n$.

\begin{theorem}\label{thm:main}
There is an algorithm that, given two non-empty sums of cycles $A,B$, computes the number of sums of cycles $X$ satisfying $AX=B$ with time complexity in 
\begin{equation}\label{eq:total_run}
O\left(|B|^3\left(\frac{|B|}{|A|}\right)^{\div(\lcm L(A))}\right).
\end{equation}
\end{theorem}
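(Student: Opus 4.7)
Let $m:=\lcm L(A)$, and for any positive integer $\ell$ write $\ell=\alpha(\ell)\beta(\ell)$ where $\alpha(\ell)$ is the largest divisor of $\ell$ whose prime factors all divide $m$ and $\beta(\ell)=\ell/\alpha(\ell)$ is coprime to $m$. A short check on $p$-adic valuations shows that for every $a\mid m$ and every positive integer $t$,
\[
\gcd(a,t)=\gcd(a,\alpha(t))\quad\text{and}\quad\lcm(a,t)=\lcm(a,\alpha(t))\cdot\beta(t),
\]
so the cycles of $AX$ arising from a single cycle $C_t$ of $X$ all have $\beta$-part $\beta(t)$. My first step uses this to split the problem by $\beta$-value: if $AX=B$, then for each $v$ appearing as a $\beta$-part in $L(B)$, the cycles of $X$ with $\beta=v$ form a sub-sum $X_v$ satisfying $AX_v=B_v$, where $B_v$ is the sub-sum of $B$ collecting cycles with $\beta$-part $v$. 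Dividing all cycle lengths in both sides of $AX_v=B_v$ by $v$ (legitimate because $v$ divides every such length) yields an equivalent equation $AY=B'$ with $Y$ an $m$-supported sum of cycles. The total number of solutions factors as the product, over the relevant $v$'s, of the number of solutions of these $m$-supported equations.

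The second step attacks each $m$-supported subproblem by enumerating a tuple $(n_d)_{d\mid m}$ of nonnegative integers, each bounded by $|B|/|A|$. The motivation is a second identity: if $t$ is $m$-supported and $d:=\gcd(t,m)$, then $\gcd(a,t)=\gcd(a,d)$ and $\lcm(a,t)=\lcm(a,d)\cdot(t/d)$ for every $a\mid m$, so the contribution of $C_t$ to $AY$ depends on $t$ only through $(d,t/d)$ and consists of cycles of $B'$ whose lengths are all multiples of $t$. For each divisor $d$ of $m$, I would enumerate one parameter $n_d$ encoding the number of ``small'' type-$d$ cycles of $Y$ (for instance, the multiplicity of $C_d$ itself, or an analogous type-$d$ summary statistic), giving at most $(|B|/|A|+1)^{\div(m)}$ tuples. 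For each fixed tuple, I would then run a divide-and-conquer procedure on $B'$ that processes cycle lengths in decreasing order: the longest length remaining in $B'$, combined with the tuple, should unambiguously identify one candidate cycle $C_t$ of $Y$; subtracting its entire contribution $\sum_i\gcd(a_i,\gcd(t,m))\cdot C_{\lcm(a_i,\gcd(t,m))\cdot t/\gcd(t,m)}$ from $B'$ strictly shrinks the instance and lets the recursion continue. A single such pass runs in $O(|B|^2)$ time and returns either $0$ or $1$ valid $Y$.

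Summing over the at most $|B|$ values of $v$, and for each $v$ running $(|B|/|A|+1)^{\div(m)}$ enumerations each of cost $O(|B|^2)$, gives the claimed $O(|B|^3(|B|/|A|)^{\div(m)})$ complexity. The main obstacle is the correctness of the inner divide-and-conquer: after fixing $(n_d)_{d\mid m}$, one must argue that the longest cycle length still present in $B'$ is always attributable to a unique cycle $C_t$ of $Y$, or at worst to a small polynomially controllable list of candidates. I expect this to follow from the multiples-of-$t$ structural property combined with a careful case analysis on the admissible $(d,t/d)$ pairs producing a given length in $B'$, but pinning down what information the tuple $(n_d)_{d\mid m}$ must carry so that this case analysis collapses to a single candidate is the crux of the argument.
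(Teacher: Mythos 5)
Your first step (splitting $B$ according to the part of each cycle length coprime to $m=\lcm L(A)$, and dividing those lengths out) is sound: since every $a\in L(A)$ is coprime to such a factor $v$, one indeed gets a perfect split with a product formula for the number of solutions, and this is in the spirit of the paper's split/reduction machinery. The genuine gap is in your second step, and you name it yourself: after your split, the subinstances are only ``$m$-smooth supported'', not \emph{basic} in the paper's sense. The candidate cycle lengths $t$ of $Y$ range over all $m$-smooth integers up to $n=|B|/|A|$ (e.g.\ $A=C_2+C_6$, so $m=6$, admits candidate lengths $4,8,12,\dots$ that divide no element of $\Div(6)$), so there is no reason that a tuple $(n_d)_{d\mid m}$ of $\div(m)$ summary statistics determines $Y$, nor that your greedy pass (``the longest remaining length of $B'$ is attributable to a unique cycle $C_t$'') is correct or exhaustive; several distinct solutions can agree on any natural divisor-indexed summary while differing in the cofactors $t/d$ and their multiplicities. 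Without that, neither the correctness of the count nor the claimed $(|B|/|A|+1)^{\div(m)}$ bound on the enumeration is established, and this is precisely where the whole exponent $\div(\lcm L(A))$ in the running time has to come from.

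The paper closes exactly this gap with machinery your proposal lacks: it splits further by $p$-adic valuation whenever some $b\in L(B)$ has $\nu_p(b)>\nu_p(a)$ for all $a\in L(A)$ (Lemma~\ref{lem:partition2}), and, crucially, its reduction does not leave $A$ untouched as yours does --- it replaces $A$ by the contraction $A\dd p$ (each cycle of length $pa$ becomes $p$ cycles of length $a$), while dividing lengths in $B$ (Lemmas~\ref{lem:product-reduction} and~\ref{lem:instance-full-reduction}). Iterating split and reduction yields basic instances $(A_i,B_i)$ with $L(A_i,B_i)\subseteq\Div(\lcm L(A_i))\subseteq\Div(\lcm L(A))$ (Lemma~\ref{lem:main}); only then is enumerating multiplicity vectors indexed by at most $\div(\lcm L(A))$ values, i.e.\ the brute force on the support of Lemma~\ref{lem:brute}, both correct and within the claimed bound. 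To repair your argument you would either have to prove your conjectured uniqueness claim for the greedy (which I do not believe holds in general for non-basic instances), or introduce an operation on $A$ analogous to the contraction that eliminates the $m$-smooth lengths exceeding the divisor structure of $L(A)$.
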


For the proof, we introduce two operations on an instance $(A,B)$ that we hope will be useful for further progress on the division problem. These are the \EM{split} and \EM{reduction} operations. The first splits $B$ into $B=B_1+B_2$ so that any solution of $(A,B)$ is obtained by adding a solution to $(A,B_1)$ with a solution to $(A,B_2)$. The second reduces $(A,B)$ into a smaller instance $(A',B')$ so that any solution of $(A,B)$ is obtained by multiplying the length of the cycles of a solution  of $(A',B')$ by some constant $d$. Repeating as much as  possible these operations, we obtain a \EM{decomposition} of $(A,B)$ into few smaller instances, which can be quickly solved with an easy improvement of the brute force approach, that we call \EM{brute force approach on the support}. The solutions of $(A,B)$ are then obtained with a simple combination of the solutions of the instances of its decomposition. In particular, the number of solutions of $(A,B)$ is simply the product of the number of solutions of the instances of the decomposition. 


\medskip
This paper is an extended version of the conference paper \cite{bridoux2024dividing}. It provides additional theoretical results, extended proofs of the main theorems, and a more detailed discussion of the implications and limitations of our approach. In particular, the content of the last two sections and the appendix, summarized briefly below, is new.

\paragraph{Organization of the paper} Section \ref{sec:preliminaries} gives the notations and basic results used throughout the paper. Section \ref{sec:support} presents in detail the brute force approach sketched above and its improvement, the brute force approach on the support, used in the algorithm of Theorem~\ref{thm:main}. Section \ref{sec:decomposition} presents the decomposition technique, involving the split and reduction operations. This decomposition is actually the main contribution, Theorem~\ref{thm:main} being a simple application. Section \ref{sec:principal_support} discusses an improvement of the brute force approach on the support, which should be a potential useful tool for further developments. To illustrate this, we easily prove that it gives, in polynomial time, all the solutions of instances $(A,B)$ such that $L(A)\setminus \{1\}$ only contains primes, or such that $1\in L(B)$. Concluding remarks are given in Section \ref{sec:conclusion}. The appendix gives the proofs of all the general observations made on $\S$ in this introduction; it is independent of the rest of the paper in that it only refers on the material given in Section \ref{sec:preliminaries}. In Appendix \ref{an:irr_prime}, we prove that almost all sums of cycles are irreducible and that $\S$ has no prime element. In Appendix \ref{an:many_irr} we prove that, for every $\epsilon>0$ and infinitely many $n$, there exists an instance $(A,B)$ with $|A|=n$ and $|B|=n^2$ such that the number of solutions of $(A,B)$ and the number of irreducible factorizations of $B$ is at least \eqref{eq:super_poly}, hence super-polynomial. Finally, in Appendix \ref{an:product}, we prove that a sum of cycles with $n$ vertices has at most $e^{O(\sqrt{n})}$ irreducible factorizations.

\section{Preliminaries}\label{sec:preliminaries}

\paragraph{Integers} 

Given $N\subseteq \N$, we denote by \EM{$\lcm N$} and \EM{$\gcd N$} the least common multiple and the greatest common divisor of the integers in $N$, respectively. For $n,m\in \N$, we set $\EM{n\lor m}=\lcm(n,m)$ and $\EM{n\land m}=\gcd(n,m)$, and for $N,M\subseteq \N$ we set $\EM{N\lor M}=\{n\lor m\mid n\in N,m\in M\}$. We denote by \EM{$\Div(n)$} and \EM{$\Mult(n)$} the set of divisors and positive multiples of $n$, respectively, and set $\EM{\div(n)}=|\Div(n)|$. We set $\EM{\Div(N)}=\bigcup_{n\in N}\Div(n)$, $\EM{\Mult(N)}=\bigcup_{n\in N}\Mult(n)$ and $\EM{\div(N)}=|\Div(N)|$.  For a positive integer $p$, we write \EM{$p\mid N$} to means that $p\mid n$ for all $n\in N$. We set $\EM{pN}=\{pn\mid n\in N\}$ and  $\EM{N/p}=\{n/p\mid n\in N, p\mid n\}$. For a positive integer $n$, and a prime $p$, we use the usual notation \EM{$\nu_p(n)$} to denote the greatest integer $\alpha$ such that $p^\alpha\mid n$.

\paragraph{Sums of cycles} 

The cycle of length $\ell$ is denoted \EM{$C_\ell$}, and the sum of cycles that consists of $n$ cycles of length $\ell$ is denoted by \EM{$nC_\ell$}. Let $A,B,X$ be sums of cycles. The number of vertices in $A$ is denoted \EM{$|A|$} and is the \EM{size} of $A$. The number of cycles of length $\ell$ in $A$ is denoted by \EM{$A(\ell)$}. Thus $A=\sum_{\ell\geq 1} A(\ell)C_\ell$ and 
\begin{equation}\label{eq:A_partition}
|A|=\sum_{\ell\geq 1}\ell A(\ell).
\end{equation}
The \EM{support} of $A$ is the set of $\ell$ such that $A(\ell)>0$, denoted \EM{$L(A)$}. We write \EM{$A\subseteq B$} to mean that $A$ is a subgraph of $B$, equivalently, $A(\ell)\leq B(\ell)$ for all $\ell\geq 1$. If $A\subseteq B$ then \EM{$B-A$} is the sum of cycles $A'$ such that $A+A'=B$. We easily check (see e.g \cite{dennunzio2018polynomial}) that the product between $C_a$ and $C_b$ is 
\[
C_aC_b=\left(\frac{ab}{a\lor b}\right)C_{a\lor b}=(a\land b)C_{a\lor b}.
\]
By distributivity, the product $AX$ satisfies, for all integers $\ell$,  
\begin{equation}\label{eq:mult}
(AX)(\ell)=\frac{1}{\ell}\sum_{\substack{a,x\in\N\\a\lor x=\ell}} aA(a)xX(x).
\end{equation}
Note that since $A(a)X(x)=0$ if $a\not\in L(A)$ or $x\not\in L(X)$, we can restrict the sum to the couples $(a,b)\in L(A)\times L(X)$ satisfying $a\lor x=\ell$ to get a finite expression. It is important to note that, by \eqref{eq:mult}, we have 
\begin{equation}\label{eq:LAB}
L(AX)=L(A)\lor L(X) 
\end{equation}
and thus 
\begin{equation}\label{eq:divLAX}
L(X)\subseteq \Div(L(AX)).
\end{equation}
If $A,B$ are non-empty, then $(A,B)$ is an \EM{instance}, and its \EM{size} is $|A|+|B|$. A \EM{solution} of $(A,B)$ is a sum of cycles $X$ such that $AX=B$. Note that $|X|=|B|/|A|$ for every solution $X$. We denote by \EM{$\Sol(A,B)$} the set of solutions, and $\EM{\sol(A,B)}=|\Sol(A,B)|$. If a solution exists, we say that $A$ \EM{divides} $B$ and we write \EM{$A\mid B$}.

\paragraph{Integer partitions} 

Let $n$ be a positive integer. A \EM{partition} of $n$ is a non-decreasing sequence of positive integers which sum to $n$. We denote by \EM{$p(n)$} the numbers of partition of $n$. As mentioned in the introduction, $p(n)$ is te number of sum of cycles with $n$ vertices; by \eqref{eq:A_partition}, we can regard a sum of cycles $A$ with $n$ vertices as a partition of $n$ where the number of occurences of each integer $\ell$ is $A(\ell)$. Let $\EMM{c_0}=\pi\sqrt{2/3}$. Hardy and Ramanujan \cite{hardy1918asymptotic} famously proved that $p(n)\sim (e^{c_0\sqrt{n}})/(4\sqrt{3}n)$. Here, we will only use the following bounds, proved by Erd\H{o}s (using only elementary methods) \cite{erdos1942elementary}: for every $\epsilon>0$, and $n$ large enough 
\begin{equation}\label{eq:Erdos_bounds}
(c_0-\epsilon)\sqrt{n} \leq \ln p(n)\leq c_0\sqrt{n}.
\end{equation}
A partition of $n$ with parts in $L$ is non-decreasing sequence of positive integers, all in $L$, which sum to $n$. We denote by \EM{$p_L(n)$} the number of partitions of $n$ with parts in $L$. Nathanson \cite{nathanson2000elementary} gives an asymptotic formula for $p_L(n)$ when $L$ is fixed, but we won't need it.

\section{Brute force approach on the support}\label{sec:support}

We saw in the introduction that there is a natural bijection between sums of cycles with $n$ vertices and partitions of $n$. This gives the following brute force approach to find all the solutions of an instance $(A,B)$:
\begin{quote}
{\bf Brute force approach.} If $n=|B|/|A|$ is not an integer, return $\emptyset$. Otherwise, enumerate all the partitions of $n$, select those which correspond to solutions, and return them.
\end{quote}
Let us analyse the complexity of this simple algorithm. By \eqref{eq:Erdos_bounds}, there are $e^{O(\sqrt{n})}$ partitions of $n$ to enumerate, and there are simple linear-time delay algorithms for this enumeration \cite{knuth2005art} \footnote{It is much more difficult to enumerate all the functional digraphs with $n$ vertices, and the recent paper \cite{defrain2024polynomial} provides a quadratic-time delay algorithm for this task.}. Hence the enumeration can be done in $e^{O(\sqrt{n})}$. To check if a partition corresponds to a solution, we take the sum of cycles $X$ corresponding to the partition, we compute the product $AX$ in $O(|B|)$, and we check if $AX=B$ in $O(|B|)$ \cite{hopcroft1974linear}. The total running time is thus 
\begin{equation}\label{eq:algo1}
|B|e^{O(\sqrt{n})}.
\end{equation}


\medskip
Even if the only information that the algorithm extracts from the instance is the size $|B|/|A|$ of the solutions, it is actually not so easy to improve it significantly. Our approach to extract more information is to consider what we call the \EM{support} of the instance, which depends on the supports of $A$ and $B$ and the size $|B|/|A|$ of solutions. 

\begin{definition}
The \EM{support} of an instance $(A,B)$, denoted \EM{$L(A,B)$}, is the set of positive integers $x\leq |B|/|A|$ such that $L(A)\lor x\subseteq L(B)$. 
\end{definition}

\begin{example}\label{ex:1}
\[
\begin{array}{rcl}
L(C_6,C_6+C_{12})&=&\{1,2,3\}\\
L(C_6,3C_6+8C_{12})&=&\{1,2,3,4,6,12\}\\
L(C_6,6C_5+C_6)&=&\{1,2,3,6\}\\
\end{array}
\]
Note that $6,12\not\in L(C_6,C_6+C_{12})$ since $6,12 \geq | C_6+C_{12}|/|C_6| = 18/6 = 3$.
\end{example}


The main property of $L(A,B)$ is that it bounds the support of any solution:
\begin{equation}\label{eq:support}
\forall X\in\Sol(A,B),\qquad L(X)\subseteq L(A,B).
\end{equation}
Indeed, if $AX=B$ then, by \eqref{eq:LAB}, we have $L(A)\lor L(X)=L(B)$. Since $\max L(X)\leq |X|=|B|/|A|$ we have $L(X)\subseteq L(A,B)$. In other words solutions are partitions of $n=|B|/|A|$ with parts in $L(A,B)$. This gives the following improvement to the previous algorithm:
\begin{quote}
{\bf Brute force approach on the support.} If $n=|B|/|A|$ is not an integer, return $\emptyset$. Otherwise, compute $L(A,B)$, enumerate all the partitions of $n$ with parts in $L(A,B)$, select those which correspond to solutions, and return them.
\end{quote}
Since $L(A,B)$ only contains integers between $1$ and $n=|B|/|A|$, we can compute $L(A,B)$ with time complexity $O(|B|^2)$ by enumerating the integers $1\leq x\leq n$ and selecting, in $O(|A||B|)$, those that satisfy $L(A)\lor x\subseteq L(B)$. Next, to enumerate the partitions of $n$ with parts in $L=L(A,B)$, we proceed as follows. In such a partition, the number of occurrences of $x\in L$ is between $0$ and $\lfloor n/x\rfloor$. Hence we can enumerate all the functions $f:L\to\N$ such that $f(x)\leq 1+\lfloor n/x\rfloor$ for all $x\in L$, and select those which correspond to partitions, that is, such that $\sum_{x\in L}xf(x)=n$. It is easy to show, by induction on $|L|$, that the number of such functions is at most $n^{|L|-1}$. Since the delay for the enumeration of these functions is linear, this gives an enumeration of the partitions of $n$ with parts in $L$ in $O(n^{|L|})$. Then, as before, we check if a partition corresponds to a solution in $O(|B|)$. The total complexity is thus $O(|B|^2+|B| n^{|L|})$, that we simplify in $O(|B|^2 n^{|L|})$. Hence we have proved the following.

\begin{lemma}\label{lem:brute}
The brute force approach on the support computes the set of solutions of $(A,B)$ in 
\begin{equation}\label{eq:algo2}
O\left(|B|^2\left(\frac{|B|}{|A|}\right)^{|L(A,B)|}\right).
\end{equation}
\end{lemma}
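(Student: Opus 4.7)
The plan is to formalize the three-step analysis already sketched before the statement: (i) compute the support $L=L(A,B)$; (ii) enumerate the partitions of $n=|B|/|A|$ whose parts lie in $L$; (iii) for each such partition, reconstruct the corresponding sum of cycles $X$ and test $AX=B$. Correctness comes entirely for free from \eqref{eq:support}: every solution $X$ satisfies $L(X)\subseteq L(A,B)$, so every solution corresponds to one of the enumerated partitions. Step~(iii) is routine: forming $AX$ via \eqref{eq:mult} takes $O(|B|)$, and the linear-time isomorphism test of \cite{hopcroft1974linear} then decides $AX=B$ in $O(|B|)$, giving $O(|B|)$ per partition.

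For step~(i), I would store $L(B)$ as a boolean array of length $|B|$, then for each candidate $x\in\{1,\dots,\lfloor n\rfloor\}$ iterate over $a\in L(A)$ and test whether $a\vee x\in L(B)$ in $O(1)$. This gives an overall cost of $O(n\cdot|L(A)|)$, which the very loose bound $O(|B|^2)$ comfortably absorbs. The only mild subtlety is ensuring that the special cases $n\notin\N$ and $L=\emptyset$ are handled correctly, both of which can be detected in $O(|B|)$.

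The core technical ingredient is step~(ii). The plan is to parameterize partitions of $n$ with parts in $L$ by maps $f:L\to\N$ with $f(x)\le\lfloor n/x\rfloor$, retaining only those satisfying $\sum_{x\in L}xf(x)=n$, and to enumerate these with linear-time delay by a standard nested-loop procedure over the values $f(x)$ (in any fixed order of $L$), with the running partial sum pruned against $n$. The counting bound $n^{|L|-1}$ I would prove by a short induction on $|L|$: fix an element $x_0\in L$, note that $f(x_0)$ takes at most $1+\lfloor n/x_0\rfloor\le n$ values, and for each such value apply the inductive hypothesis to $L\setminus\{x_0\}$ and the residual target $n-x_0 f(x_0)\le n$, which contributes at most $n^{|L|-2}$ maps. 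The delay being linear, this phase costs $O(n^{|L|})$.

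Multiplying the cost per partition by the number of partitions and adding the support-computation cost yields $O(|B|^2)+O(|B|\cdot n^{|L|})=O(|B|^2\,n^{|L|})$, which is \eqref{eq:algo2}. The main obstacle is really only the clean induction for the $n^{|L|-1}$ estimate; once that is in place, everything else is bookkeeping.
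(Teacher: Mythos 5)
Your proposal is correct and follows essentially the same route as the paper's own analysis given just before the lemma: compute $L(A,B)$ in $O(|B|^2)$, enumerate partitions of $n=|B|/|A|$ with parts in $L=L(A,B)$ via multiplicity functions $f:L\to\N$ (at most $n^{|L|-1}$ candidates, enumerated with linear delay), and test each corresponding $X$ in $O(|B|)$ using \eqref{eq:mult} and the linear-time isomorphism check, with correctness from \eqref{eq:support}. The only small point to fix is in your induction: choose $x_0\neq 1$ (e.g.\ the largest element of $L$) so that $1+\lfloor n/x_0\rfloor\le n$ actually holds; with that adjustment the argument matches the paper's.
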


\section{Decomposition lemma}\label{sec:decomposition}

In this section, we state and prove our main result, which is the polynomial-time decomposition of an instance $(A,B)$ into smaller instances $(A_1,B_1),\dots,(A_k,B_k)$, so that the number of solutions $(A,B)$ can be reconstructed from the number of solutions of the smaller instances $(A_i,B_i)$. These smaller instances always exhibit a particular property: the size of their support is bounded according to $A_i$. Using the brute force approach on the support to compute the number of solutions of each $(A_i,B_i)$, we obtain the number of solutions for $(A,B)$ with a complexity similar to \eqref{eq:algo2}, but where the critical term $|L(A,B)|$ in exponent is, as for the smaller instances, bounded according to $A$. Hence the whole algorithm becomes polynomial when $A$ is fixed. 

\medskip
For the details we need some definitions. Let us say that an instance $(A,B)$ is \EM{consistent} if 
\[
L(A)\lor L(A,B)=L(B).
\]
Then non-consistent instances have no solution: if $AX=B$ then $(A,B)$ is consistent since 
\[
L(B)=L(AX)\overset{\eqref{eq:LAB}}{=}L(A)\lor L(X)\overset{\eqref{eq:support}}{\subseteq} L(A)\lor L(A,B)\subseteq L(B).
\]

\begin{example}\label{ex:2}
We take again the three instances of Example \ref{ex:1}.
\[
\begin{array}{rcllll}
L(C_6,C_6+C_{12})&=&\{1,2,3\}&&\textrm{non-consistent}:&\{6\}\lor \{1,2,3\}=\{6\}\neq \{6,12\}\\
L(C_6,3C_6+8C_{12})&=&\{1,2,3,4,6,12\}&&\textrm{consistent}: &\{6\}\lor \{1,2,3,4,6,12\}=\{6,12\}\\
L(C_6,6C_5+C_6)&=&\{1,2,3,6\}&&\textrm{non-consistent}:& \{6\}\lor \{1,2,3,6\}=\{6\}\neq \{5,6\}
\end{array}
\]
\end{example}

Let us say that an instance $(A,B)$ is \EM{basic} if 
\[
L(B)\subseteq \Div(\lcm L(A)).
\]
This is equivalent to say that for any prime $p$ and $b\in L(B)$, there exists $a\in L(A)$ such that $\nu_p(b)\leq\nu_p(a)$. Note that the support of a basic instance $(A,B)$ is bounded according to $A$ only:
\[
L(A,B)\subseteq \Div(L(B))\subseteq \Div(\lcm L(A)).
\]

\begin{example} The two instances below are consistent, but only one is basic: 
\[
\begin{array}{lll}
(C_2+C_6,C_{12})&\textrm{non-basic}:&L(C_{12})=\{12\}\not\subseteq \Div(\lcm \{2,6\})=\Div(6).\\
(C_5+C_6,C_6+C_{15})&\textrm{basic}:&L(C_6+C_{15})=\{6,15\}\subseteq \Div(\lcm \{5,6\})=\Div(30).
\end{array}
\]
\end{example}

\medskip
The \EM{cycle length multiplication} of $A$ by $p$, denoted \EM{$A\m p$}, is the sum of cycles obtained from $A$ by multiplying by $p$ the length of every cycle in $A$. In other words: for all $a\geq 1$, we have 
\[
(A\m p)(a)=
\left\{
\begin{array}{ll}
A(a/p)&\textrm{if }p\mid a\\
0&\textrm{otherwise.}
\end{array}
\right.
\]

\begin{example}
\[
(2C_1+3C_2+5C_3)\m 3=2C_3+3C_6+5C_9.
\]
\end{example}

\medskip
Given two sets of sums of cycles $\boldsymbol{A}$ and $\boldsymbol{B}$ we set 
\[
\EM{\boldsymbol{A}+\boldsymbol{B}}=\{A+B\mid A\in\boldsymbol{A}, B\in \boldsymbol{B}\},\qquad 
\EM{\boldsymbol{A}\m p}=\{A\m p\mid A\in \boldsymbol{A}\}.
\]

We are now ready to state the decomposition sketched at the beginning of the section. 

\begin{lemma}[{\bf Decomposition lemma}]\label{lem:main}
There is an algorithm that, given a consistent instance $(A,B)$, computes in $O(|B|^3)$ a list of $k\leq |B|$ basic instances $(A_1,B_2),\dots,(A_k,B_k)$ and positive integers $p_1,\dots,p_k$ such that: for all $1\leq i\leq k$, 
\begin{itemize}
\item 
$|A_i|=|A|$, 
\item
$\lcm L(A_i) \mid \lcm L(A)$, 
\item
$|B_1|+\cdots+|B_k|\leq |B|$, 
\item $\Sol(A,B)=(\Sol(A_1,B_1)\m p_1)+\cdots +(\Sol(A_k,B_k)\m p_k)$.
\end{itemize}
\end{lemma}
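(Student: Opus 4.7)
The plan is to prove Lemma~\ref{lem:main} by a recursive divide-and-conquer scheme driven by two elementary operations on a non-basic instance $(A,B)$: a \emph{split} of $B$ along a prime $p$ and a \emph{reduction} $(A,B_{\ge\beta})\mapsto(A^\star,B^\star)$ that strips $p$ from $\lcm L(A)$. The recursion terminates as soon as the current instance is basic, and each leaf is returned as a triple $(A_i,B_i,p_i)$.

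First, I would establish the split. Since $(A,B)$ is not basic, there exist a prime $p$ and $b\in L(B)$ with $\nu_p(b)>\alpha$, where $\alpha:=\max_{a\in L(A)}\nu_p(a)$. Setting $\beta:=\alpha+1$, partition $B=B_{\ge\beta}+B_{<\beta}$ according to whether $\nu_p(b)\ge\beta$, and write any candidate solution as $X=X_{\ge\beta}+X_{<\beta}$. Because $\nu_p(a\lor x)=\max(\nu_p(a),\nu_p(x))$ and $\nu_p(a)\le\alpha<\beta$, every cycle of $AX_{\ge\beta}$ has $\nu_p\ge\beta$ while every cycle of $AX_{<\beta}$ has $\nu_p<\beta$; matching against $B=B_{\ge\beta}+B_{<\beta}$ forces $AX_{\ge\beta}=B_{\ge\beta}$ and $AX_{<\beta}=B_{<\beta}$. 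Combined with bilinearity of the product in the other direction, this yields $\Sol(A,B)=\Sol(A,B_{\ge\beta})+\Sol(A,B_{<\beta})$.

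Next I would prove the reduction. On the high piece $(A,B_{\ge\beta})$ the same valuation argument forces $\nu_p(x)\ge\beta$ for every $x\in L(X)$, so every solution is uniquely of the form $X=Y\m p^\beta$. A direct computation from~\eqref{eq:mult}, exploiting $a\lor(yp^\beta)=p^\beta(a''\lor y)$ with $a''=a/p^{\nu_p(a)}$ (valid because $\nu_p(a)<\beta$), gives
\[
A\cdot(Y\m p^\beta)=(A^\star Y)\m p^\beta,\qquad A^\star(a''):=\sum_{k\ge 0}p^k\,A(p^k a''),
\]
for every $a''$ coprime to $p$. Letting $B^\star$ denote $B_{\ge\beta}$ with every cycle length divided by $p^\beta$, it follows that $\Sol(A,B_{\ge\beta})=\Sol(A^\star,B^\star)\m p^\beta$. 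A short calculation gives $|A^\star|=|A|$ and $L(A^\star)=\{a/p^{\nu_p(a)}:a\in L(A)\}$, whence $\lcm L(A^\star)=\lcm L(A)/p^\alpha$ divides $\lcm L(A)$.

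Finally I would assemble the algorithm: if $(A,B)$ is basic, return $[(A,B,1)]$; otherwise pick any prime $p$ witnessing non-basicness, apply the split and the reduction, recurse on $(A,B_{<\beta})$ and $(A^\star,B^\star)$ (skipping whichever of the two $B$-pieces is empty), multiply the scaling factor of every triple returned by the second call by $p^\beta$, and concatenate. Combining the split and reduction identities and unrolling the induction delivers $\Sol(A,B)=\sum_i \Sol(A_i,B_i)\m p_i$. The invariants $|A_i|=|A|$ and $\lcm L(A_i)\mid\lcm L(A)$ propagate because split leaves $A$ unchanged, while the reduction satisfies $|A^\star|=|A|$ and $\lcm L(A^\star)\mid\lcm L(A)$. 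From $|B_{<\beta}|+|B^\star|=|B_{<\beta}|+|B_{\ge\beta}|/p^\beta<|B|$, a straightforward induction gives $\sum_i|B_i|\le|B|$ and at most $|B|$ basic leaves. Each recursive node costs $O(|B|^2)$ (testing basicness and forming $B_{\ge\beta}$, $B_{<\beta}$, $B^\star$, $A^\star$), and since $|B|$ strictly drops at every internal node there are $O(|B|)$ nodes, totalling $O(|B|^3)$. The only non-routine step is the identity $A\cdot(Y\m p^\beta)=(A^\star Y)\m p^\beta$: one must carefully track how the factor $p^\beta$ floats out of~\eqref{eq:mult} once each $a$ is written as $p^{\nu_p(a)}a''$, and then reindex the convolution to collect the coefficients $\sum_k p^k A(p^k a'')$ that define $A^\star$.
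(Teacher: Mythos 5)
Your argument is correct, and it reaches the statement by the same divide-and-conquer skeleton as the paper (split $B$ along a prime witnessing non-basicness, then reduce the relevant part by contracting $A$ and dividing cycle lengths in $B$, and recurse until every piece is basic), but the two key lemmas you use are genuinely different from the paper's. The paper first reduces by $d=\gcd L(A,B)$ to make the instance \emph{compact}, and its split separates $B$ by the exact valuation class $\nu_p(\ell)=\nu_p(b)$ versus $\nu_p(\ell)\neq\nu_p(b)$; compactness is exactly what guarantees that both parts are non-empty, and the whole recursion maintains compact \emph{and} consistent instances, with the reduction lemma also tracking $L(A',B')=L(A,B)/d$ (an invariant reused elsewhere in the paper). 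You instead split by the threshold $\nu_p\geq\alpha+1$ versus $\nu_p\leq\alpha$ with $\alpha=\max_{a\in L(A)}\nu_p(a)$, reduce only the high piece by the fixed power $p^{\alpha+1}$, and allow an empty low piece; since you argue directly on solutions via $p$-adic valuations, you never need the notions of compact or consistent instance, which makes your proof more self-contained (your identity $A\cdot(Y\m p^{\beta})=(A^{\star}Y)\m p^{\beta}$ is the analogue of the paper's $(A\dd p)(X\d p)=(AX)\d p$, with $A^{\star}=A\dd p^{\beta}$, and your verification of it is sound). The one place where you should tighten the write-up is the node count: ``$|B|$ strictly drops at every internal node'' only bounds the recursion \emph{depth} by $O(|B|)$, which for a branching recursion would a priori give $O(|B|^2)$ nodes and hence $O(|B|^4)$ time. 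The correct bound does follow from what you prove, namely that the children's $B$-sizes are integers summing to at most the parent's size minus one (because $|B_{\geq\beta}|\geq p^{\beta}$ and $|B^{\star}|=|B_{\geq\beta}|/p^{\beta}$), from which an easy induction shows the tree has at most $2|B|-1$ nodes, restoring the claimed $O(|B|^3)$; the paper sidesteps this by growing a flat list of instances one at a time, each step in $O(|B|^2)$, for at most $|B|$ steps.
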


Before proving this lemma, let us show that it easily implies Theorem \ref{thm:main} given in the introduction, restated below. It shows that there is a general algorithm for computing the solutions of an instance $(A,B)$ with property to be polynomial when $A$ is fixed. 

\setcounter{theorem}{0}

\begin{theorem}
There is an algorithm that, given two non-empty sums of cycles $A,B$, computes the number of sums of cycles $X$ satisfying $AX=B$ with time complexity in 
\begin{equation*}
O\left(|B|^3\left(\frac{|B|}{|A|}\right)^{\div(\lcm L(A))}\right).
\end{equation*}
\end{theorem}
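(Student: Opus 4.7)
The plan is to turn the Decomposition Lemma into the announced algorithm by a short assembly argument. Given a non-empty instance $(A,B)$, I would first compute $L(A,B)$ in time $O(|B|^2)$ and test consistency; if $(A,B)$ is not consistent then we already know $\Sol(A,B)=\emptyset$ and we return $0$. Otherwise I apply Lemma~\ref{lem:main} to obtain, in time $O(|B|^3)$, the basic instances $(A_1,B_1),\dots,(A_k,B_k)$ with $k\le |B|$ and the integers $p_1,\dots,p_k$ from the decomposition.

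For each basic instance $(A_i,B_i)$ I would invoke the brute force approach on the support (Lemma~\ref{lem:brute}). The key observation is that on a basic instance the support is small in a way controlled by $A$ alone: using \eqref{eq:divLAX} and the definition of basic, together with $\lcm L(A_i)\mid \lcm L(A)$ given by the lemma,
\[
L(A_i,B_i)\;\subseteq\;\Div(L(B_i))\;\subseteq\;\Div(\lcm L(A_i))\;\subseteq\;\Div(\lcm L(A)),
\]
hence $|L(A_i,B_i)|\le \div(\lcm L(A))$. Since $|A_i|=|A|$, Lemma~\ref{lem:brute} computes $\sol(A_i,B_i)$ in time $O\bigl(|B_i|^2 (|B_i|/|A|)^{\div(\lcm L(A))}\bigr)$.

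From the set equality $\Sol(A,B)=(\Sol(A_1,B_1)\m p_1)+\cdots+(\Sol(A_k,B_k)\m p_k)$ given by Lemma~\ref{lem:main}, I would deduce the multiplicative count $\sol(A,B)=\prod_{i=1}^{k}\sol(A_i,B_i)$. So the algorithm just reads off this product from the previously computed counts. For the running time I add the three contributions: consistency check $O(|B|^2)$, decomposition $O(|B|^3)$, and the sum of brute force calls, bounded using $|B_i|\le|B|$ and $\sum_i|B_i|\le|B|$ by
\[
\sum_{i=1}^{k}O\!\left(|B_i|^2\left(\frac{|B_i|}{|A|}\right)^{\!\div(\lcm L(A))}\right)\;=\;O\!\left(|B|^2\left(\frac{|B|}{|A|}\right)^{\!\div(\lcm L(A))}\right),
\]
and the total fits within $O\bigl(|B|^3(|B|/|A|)^{\div(\lcm L(A))}\bigr)$.

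The only delicate point is passing from the set equality in Lemma~\ref{lem:main} to the product formula for cardinalities: one must check that the map $(X_1,\dots,X_k)\mapsto (X_1\m p_1)+\cdots+(X_k\m p_k)$ is injective, i.e.\ that each $X\in\Sol(A,B)$ is produced exactly once. This is the main obstacle, but it should be inherited directly from the proof of Lemma~\ref{lem:main}: the cycle lengths of $X_i\m p_i$ lie in $p_iL(X_i)$, and the split/reduction construction that underlies the lemma is designed so that these sets isolate the different pieces of $X$. Granting this, the theorem follows as a direct consequence of Lemmas~\ref{lem:brute} and~\ref{lem:main}.
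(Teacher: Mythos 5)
Your proposal is correct and is essentially the paper's own proof: consistency check, the Decomposition Lemma, the brute force approach on the support for each basic instance using $L(A_i,B_i)\subseteq\Div(L(B_i))\subseteq\Div(\lcm L(A_i))\subseteq\Div(\lcm L(A))$, and the product $s_1\cdots s_k$, with the same complexity bookkeeping (the paper bounds the $k\le|B|$ brute force calls by $k$ times the maximal term rather than via $\sum_i|B_i|\le|B|$, which changes nothing). The injectivity point you flag is likewise left implicit in the paper, which simply invokes the fourth bullet of Lemma~\ref{lem:main}; it does hold because the split/reduction construction keeps the sets $p_i L(A_i,B_i)$ pairwise disjoint, so every solution $X$ decomposes uniquely into the pieces $X_i\m p_i$, exactly as you anticipated.
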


\begin{proof}[{\bf Proof of Theorem~\ref{thm:main} assuming Lemma~\ref{lem:main}.}]
The algorithm is as follows. First we check if $(A,B)$ is consistent. If $(A,B)$ is non-consistent, then $(A,B)$ has no solution and we output $0$. Otherwise, we compute the $k\leq |B|$ basic instances $(A_1,B_1),\dots,(A_k,B_k)$ with the algorithm of Lemma~\ref{lem:main}. Then, for all $1\leq i\leq k$, we use the brute force approach on the support to compute number $s_i$ of solutions of $(A_i,B_i)$, and we output the product $s_1\cdots s_k$, which is correct by the fourth bullet in Lemma~\ref{lem:main}. 

\medskip
Let us analyse the complexity. First, since $L(A,B)$ can be computed in $O(|B|^2)$ (see Section \ref{sec:support}), we can check if $(A,B)$ is consistent with the same complexity. Then, by Lemma \ref{lem:main}, the basic instances $(A_1,B_1),\dots,(A_k,B_k)$ are computed in $O(|B|^3)$. Let $1\leq i\leq k$. Using Lemma~\ref{lem:brute} and the fact that $(A_i,B_i)$ is basic, $s_i$ is computed in 
\begin{equation*}
O\left(|B_i|^2\left(\frac{|B_i|}{|A_i|}\right)^{\div(\lcm L(A_i))}\right).
\end{equation*}
By Lemma~\ref{lem:main}, we have $|A_i|=|A|$, $|B_i|\leq |B|$ and $\lcm L(A_i)$ divides $\lcm L(A)$. We can thus simplify the expression, with a loss of precision, as follows: $s_i$ is computed in 
\begin{equation*}
O\left(|B|^2\left(\frac{|B|}{|A|}\right)^{\div(\lcm L(A))}\right).
\end{equation*}
Since $k\leq |B|$ we obtain the global time-complexity of the statement. 
\end{proof}

The rest of the section is devoted to the proof of Lemma~\ref{lem:main}.

\subsection{Split}\label{sec:split}

The decomposition lemma involves two operations. The first is the split, the second is the reduction. In this subsection, we treat the split, the reduction is the subject of the next subsection. 

\medskip
A \EM{split} of an instance $(A,B)$ is a couple of instances $(A,B_1),(A,B_2)$ such that $B=B_1+B_2$; it is \EM{consistent} if the two instances are. A key observation is that if $(A,B_1),(A,B_2)$ is a split of $(A,B)$ then
\begin{equation}
\Sol(A,B_1)+\Sol(A,B_2)\subseteq \Sol(A,B)
\end{equation}
Indeed, if $AX_1=B_1$ and $AX_2=B_2$ then $A(X_1+X_2)=AX_1+AX_2=B_1+B_2=B$. If the inclusion above is an equality, we say that the split is \EM{perfect}. We can then reconstruct the solutions of $(A,B)$ from that of $(A,B_1)$ and $(A,B_2)$, which is obviously interesting from an algorithmic point of view. 

\medskip
Here is a simple sufficient condition for an instance to admit a consistent perfect split. 

\begin{lemma}\label{lem:partition}
Let $(A,B_1),(A,B_2)$ be a split of a consistent instance $(A,B)$. Suppose that $L(B_1)$ and $L(B_2)$ are disjoint, and suppose that $L(A,B_1),L(A,B_2)$ is a partition of $L(A,B)$. Then $(A,B_1),(A,B_2)$ is a consistent perfect split of $(A,B)$. 
\end{lemma}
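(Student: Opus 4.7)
The plan is to handle the two conclusions (consistency of both halves, and perfectness) in that order, because the inclusion $L(A)\lor L(A,B_i)\subseteq L(B_i)$ needed for consistency will immediately give the ``no leakage'' property that makes the perfect split work.

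First I would establish the following one-line observation from the definition of the support: for any $x\in L(A,B_i)$, we have $L(A)\lor x\subseteq L(B_i)$, so $L(A)\lor L(A,B_i)\subseteq L(B_i)$ for $i=1,2$. Combined with the consistency of $(A,B)$ and the hypothesis that $L(A,B_1),L(A,B_2)$ partition $L(A,B)$, this gives
\[
L(B_1)\cup L(B_2)=L(B)=L(A)\lor L(A,B)=\bigl(L(A)\lor L(A,B_1)\bigr)\cup\bigl(L(A)\lor L(A,B_2)\bigr).
\]
Since $L(B_1)$ and $L(B_2)$ are disjoint and each right-hand piece sits inside the corresponding $L(B_i)$, equality must hold piece by piece, which is exactly the consistency of $(A,B_1)$ and $(A,B_2)$.

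For perfectness, I would take $X\in\Sol(A,B)$ and split it along the support partition: define $X_1$ (resp.\ $X_2$) by $X_i(\ell)=X(\ell)$ if $\ell\in L(A,B_i)$ and $0$ otherwise. By \eqref{eq:support}, $L(X)\subseteq L(A,B)=L(A,B_1)\sqcup L(A,B_2)$, so $X=X_1+X_2$. Then $L(AX_i)=L(A)\lor L(X_i)\subseteq L(A)\lor L(A,B_i)\subseteq L(B_i)$, so the cycles appearing in $AX_1$ and in $AX_2$ have disjoint length sets. Because $AX_1+AX_2=AX=B=B_1+B_2$ with $L(B_1)\cap L(B_2)=\emptyset$, matching coefficients length by length forces $AX_i=B_i$, so $X_i\in\Sol(A,B_i)$, which together with the already-noted inclusion $\Sol(A,B_1)+\Sol(A,B_2)\subseteq\Sol(A,B)$ gives equality.

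The only step that requires any care is the first one: one has to resist the temptation to prove $L(A)\lor L(A,B_i)=L(B_i)$ directly (it is not obvious by itself), and instead derive both equalities simultaneously from the consistency of $(A,B)$ plus disjointness of $L(B_1),L(B_2)$. Once this is in hand, the perfectness argument is just bookkeeping using that $AX_1$ and $AX_2$ live on disjoint length sets.
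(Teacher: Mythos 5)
Your proof is correct and follows essentially the same route as the paper: the consistency argument is identical, and the perfectness direction uses the same splitting of $X$ along the partition $L(A,B_1),L(A,B_2)$ together with the inclusion $L(AX_i)\subseteq L(B_i)$. The only difference is cosmetic: where the paper verifies $AX_i=B_i$ by unwinding the product formula \eqref{eq:mult} coefficient by coefficient, you deduce it from $AX_1+AX_2=B_1+B_2$ and the disjointness of the length supports, which is a slightly cleaner way to finish.
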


\begin{proof}
For $i=1,2$, let $L_i=L(A,B_i)$. By definition, $L(A)\lor L_i\subseteq L(B_i)$. Since $(A,B)$ is consistent, and $L(A,B)=L_1\cup L_2$ we have  
\[
L(B)=L(A)\lor L(A,B)=(L(A)\lor L_1)\cup (L(A)\lor L_2)\subseteq L(B_1)\cup L(B_2)=L(B).
\]
Thus the above inclusion is actually an equality. Since $L(A)\lor L_i\subseteq L(B_i)$ and $L(B_1)\cap L(B_2)=\emptyset$ we deduce that $L(A)\lor L_i=L(B_i)$. Hence the split $(A,B_1),(A,B_2)$ is consistent. It remains to prove that it is perfect. 

\medskip
As already mentioned, if $X_1,X_2$ are solutions of $(A,B_1),(A,B_2)$ then
\[
A(X_1+X_2)=AX_1+AX_2=B_1+B_2=B,
\]
thus $X=X_1+X_2$ is a solution of $(A,B)$. 

\medskip
Conversely, let $X$ be a solution of $(A,B)$ and let us prove that $X=X_1+X_2$ for some solutions $X_1,X_2$ of $(A,B_1),(A,B_2)$. Let $X_i$ be the sum of cycles which contains exactly all the cycles of $X$ whose length is in $L_i$. Since $L_1\cap L_2=\emptyset$, we have $L(X_1)\cap L(X_2)=\emptyset$. By \eqref{eq:support} we have $L(X)\subseteq L(A,B)$ and since $L_1,L_2$ is a partition of $L(A,B)$ we have $X=X_1+X_2$. Using \eqref{eq:LAB} we obtain 
\begin{eqnarray*}
L(B)=L(AX)&=&L(A)\lor L(X)\\
&=&(L(A)\lor L(X_1))\cup (L(A)\lor L(X_2))=L(AX_1)\cup L(AX_2).
\end{eqnarray*}
For $i=1,2$, we have $L(X_i)\subseteq L_i$ and thus, using \eqref{eq:LAB},   
\[
L(AX_i)=L(A)\lor L(X_i)\subseteq L(A)\lor L_i= L(B_i).
\]
Since $L(B_1),L(B_2)$ is a partition of $L(B)$, we deduce that $L(AX_i)=L(B_i)$ for $i=1,2$. Hence, to prove that $X_i$ is a solution of $(A,B_i)$, it is sufficient to prove that $AX_i(b)=B_i(b)$ for all $b\in L(B_i)$. So let $b\in L(B_i)$. Since $L(B_1)\cap L(B_2)=\emptyset$, for every $a\in L(A)$ and $x\in L(X)$ with $a\lor x=b$ we have $x\in L_i$ and thus $x\in L(X_i)$. Furthermore, since $L(X_1)\cap L(X_2)=\emptyset$, we have $X_i(x)=X(x)$. Consequently, 
\begin{eqnarray*}
(AX_i)(b)
	=\frac{1}{b}\sum_{\substack{a\in L(A)\\x\in L(X_i)\\a\lor x=b}}aA(a)xX_i(x)
	&=&\frac{1}{b}\sum_{\substack{a\in L(A)\\x\in L(X_i)\\a\lor x=b}}aA(a)xX(x)\\
	&=&\frac{1}{b}\sum_{\substack{a\in L(A)\\x\in L(X)\\a\lor x=b}}aA(a)xX(x)=B(b)=B_i(b).\\
\end{eqnarray*}
\end{proof}

We now prove that a non-basic consistent instance $(A,B)$ with $\gcd L(A,B)=1$ admits a consistent perfect split; we will then prove, with the reduction operation (next subsection), that, in some sense, the condition on the $\gcd$ can be suppressed, leading to a consistent prefect split of every non-basic consistent instance.  

\begin{lemma}\label{lem:partition2}
Let $(A,B)$ be a non-basic consistent instance with $\gcd L(A,B)=1$. Let $b\in L(B)$ and a prime $p$ such that $\nu_p(b)>\nu_p(a)$ for all $a\in L(A)$; $b$ and $p$ exist since $(A,B)$ is not basic. Let $B_1$ be the sum of cycles which contains exactly all the cycles of $B$ whose length $\ell$ satisfies $\nu_p(\ell)=\nu_p(b)$, and let $B_2=B-B_1$. Then $(A,B_1),(A,B_2)$ is a consistent perfect split of $(A,B)$. 
\end{lemma}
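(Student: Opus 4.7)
The plan is to verify the two ingredients of ``consistent perfect split'' separately, relying throughout on a clean $\nu_p$-dichotomy. Set $\alpha = \nu_p(b)$, so that $L(B_1) = \{\ell \in L(B) : \nu_p(\ell) = \alpha\}$ and $L(B_2) = L(B) \setminus L(B_1)$ are disjoint and $B = B_1 + B_2$ by construction. The part $B_1$ is nonempty since $b \in L(B_1)$; for $B_2$, I would invoke the hypothesis $\gcd L(A,B) = 1$ to produce $z \in L(A,B)$ with $\nu_p(z) = 0 < \alpha$, and then for any $a \in L(A)$ the length $a \lor z$ lies in $L(B)$ with $\nu_p(a \lor z) = \nu_p(a) < \alpha$, hence in $L(B_2)$.

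The technical engine is the following dichotomy: because $\nu_p(a) < \alpha$ for every $a \in L(A)$, the identity $\nu_p(a \lor x) = \max(\nu_p(a), \nu_p(x))$ yields $\nu_p(a \lor x) = \alpha$ if and only if $\nu_p(x) = \alpha$. To prove the perfect split, I take $X \in \Sol(A,B)$ and write $X = X_1 + X_2$, where $X_1$ consists of the cycles of $X$ whose length has $\nu_p = \alpha$. The dichotomy gives $L(AX_1) \subseteq L(B_1)$ and $L(AX_2) \subseteq L(B_2)$; combined with $AX_1 + AX_2 = B_1 + B_2$ and the disjointness of $L(B_1), L(B_2)$, this forces $AX_i = B_i$ for $i = 1,2$, so $X_i \in \Sol(A, B_i)$. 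The reverse inclusion $\Sol(A,B_1) + \Sol(A,B_2) \subseteq \Sol(A,B)$ is immediate from distributivity of the product.

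To establish that both $(A,B_i)$ are themselves consistent, I would argue pointwise: for every $b' \in L(B_i)$, consistency of $(A,B)$ supplies $a \in L(A)$ and $y \in L(A,B)$ with $a \lor y = b'$; the dichotomy pins $\nu_p(y)$ to the appropriate value ($=\alpha$ for $i=1$, $\neq \alpha$ for $i=2$), which, combined with $L(A) \lor y \subseteq L(B)$, upgrades to $L(A) \lor y \subseteq L(B_i)$. The remaining requirement $y \leq |B_i|/|A|$ is the size constraint and constitutes the main obstacle: the $y$ furnished by $(A,B)$-consistency only satisfies $y \leq |B|/|A|$, which can be loose. The likely remedy is to replace $y$ by a smaller divisor $y_{\min}$ obtained by retaining only those prime factors of $y$ strictly needed to lift $a$ up to $b'$ --- formally, $\nu_q(y_{\min}) = \nu_q(b')$ when $\nu_q(a) < \nu_q(b')$ and $0$ otherwise --- and then to show, invoking $\gcd L(A,B) = 1$, that $y_{\min}$ still belongs to $L(A,B)$ and satisfies the tighter bound $y_{\min} \leq |B_i|/|A|$.
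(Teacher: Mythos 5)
Your handling of the split itself and of perfection is correct, and in fact more direct than the paper's: the paper proves that $L(A,B_1),L(A,B_2)$ partition $L(A,B)$ and then invokes Lemma~\ref{lem:partition}, whereas you decompose a solution $X$ as $X_1+X_2$ according to the value of $\nu_p$ and use the disjointness of $L(B_1),L(B_2)$ to force $AX_i=B_i$, bypassing any statement about supports. Your non-emptiness argument for $B_2$ via $\gcd L(A,B)=1$ is essentially the paper's.

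The gap is in the consistency part, exactly where you flag it, and the remedy you sketch does not close it. First, your $y_{\min}$ need not lie in $L(A,B)$: for $a'\in L(A)$ the integer $a'\lor y_{\min}$ divides $a'\lor y\in L(B)$, but $L(B)$ is not divisor-closed, so $L(A)\lor y_{\min}\subseteq L(B)$ can fail. More seriously, no repair of this shape can work, because with the cap $x\le |B_i|/|A|$ in the definition of the support taken literally, the consistency claim itself can fail: take $A=2C_1$ and $B=C_2+4C_3$, which is consistent and non-basic with $L(A,B)=\{2,3\}$ and $\gcd L(A,B)=1$; choosing $b=2$, $p=2$ gives $B_1=C_2$, and then $|B_1|/|A|=1$, so the capped support $L(A,B_1)$ is empty and $L(A)\lor L(A,B_1)\neq L(B_1)$. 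The paper's own proof never verifies this cap: it concludes $x\in L(A,B_i)$ from $L(A)\lor x\subseteq L(B_i)$ alone, i.e., it implicitly works with the uncapped set $\{x:\ L(A)\lor x\subseteq L(B_i)\}$ (the cap only discards integers too large to be a cycle length of any solution of $(A,B_i)$, since $\max L(X_i)\le|X_i|=|B_i|/|A|$, so nothing downstream is affected by ignoring it). Under that reading your pointwise argument is already complete: the $\nu_p$-dichotomy pins $\nu_p(y)$ and upgrades $L(A)\lor y\subseteq L(B)$ to $L(A)\lor y\subseteq L(B_i)$, and the $y_{\min}$ detour is unnecessary. So the fix is not to shrink $y$ but to argue (or stipulate) that the size constraint is immaterial here; as written, your last step is both unproven and, with the literal capped definition, unprovable.
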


\begin{proof}
By construction we have $L(B_1)\cap L(B_2)=\emptyset$. Let $x\in L(A,B)$. If $\nu_p(x)=\nu_p(b)$ then, for all $\ell\in (L(A)\lor x)$ we have $\nu_p(\ell)=\nu_p(b)$. Since $\ell\in L(B)$ we deduce that $\ell\in L(B_1)$. Consequently, $x\in L(A,B_1)$.  If $\nu_p(x)\neq \nu_p(b)$ then, for all $\ell\in (L(A)\lor x)$ we have $\nu_p(\ell)\neq \nu_p(b)$. Since $\ell\in L(B)$ we deduce that $\ell\in L(B_2)$. Consequently, $x\in L(A,B_2)$. This proves that $L(A,B_1),L(A,B_2)$ is a partition of $L(A,B)$. This also proves that $p\mid \gcd L(A,B_1)$, and since  $\gcd L(A,B)=1$ we deduce that $L(A,B_2)\neq\emptyset$, and thus $B_2$ is non-empty. So $(A,B_1),(A,B_2)$ is a split of $(A,B)$, which is consistent and perfect by Lemma \ref{lem:partition}.
\end{proof}

The example below illustrates this lemma. 

\begin{example}\label{ex:split}
Let $A=C_6$ and $B=3C_6+8C_{12}$. Then $(A,B)$ is consistent, but obviously not basic, and $\gcd L(A,B) =1$ (see Ex. \ref{ex:2}). Applying Lemma~\ref{lem:partition2} with $b=12$ and $p=2$ we obtain the consistent prefect split $(A,B_1),(A,B_2)$ with $B_1=8C_{12}$ and $B_2=3C_6$. Since $L(A,B_1)=\{4,12\}$ we have 
\begin{eqnarray*}
AX_1=B_1&\iff&  C_6\big(X_1(4)C_4+X_1(12)C_{12}\big)=8C_{12}\\
&\iff&  2X_1(4)C_{12}+6X_1(12)C_{12}=8C_{12}\\
&\iff &2X_1(4)+6X_1(12)=8.
\end{eqnarray*}
Thus each solution $X_1$ to $(A,B_1)$ corresponds to a partition of $8$ with parts in $\{2,6\}$: these are $2+2+2+2$ and $2+6$, giving the following two solutions:
\[
\begin{array}{l}
4C_4\\
C_4+C_{12}.
\end{array}
\]
Since $L(A,B_2)=\{1,2,3,6\}$ we have 
\begin{eqnarray*}
AX_2=B_2&\iff&  C_6\big(X_2(1)C_1+X_2(2)C_2+X_2(3)C_3+X_2(6)C_6\big)=3C_6\\
&\iff&  X_2(1)C_6+2X_2(2)C_6+3X_2(3)C_6+6X_2(6)C_6=3C_6\\
&\iff&  X_2(1)+2X_2(2)+3X_2(3)+6X_2(6)=3.
\end{eqnarray*}
Thus each solution $X_2$ to $(A,B_2)$ corresponds to a partition of $3$ with parts in $\{1,2,3,6\}$: these are $1+1+1$, $1+2$, and $3$, giving the following three solutions:
\[
\begin{array}{l}
3C_1\\
C_1+C_2\\
C_3.
\end{array}
\]
Since the split is perfect, we have $\Sol(A,B)=\Sol(A,B_1)+\Sol(A,B_2)$ and thus $(A,B)$ has the following $6$ solutions:
\[
\begin{array}{l}
3C_1+4C_4\\
3C_1+C_4+C_{12}\\
C_1+C_2+4C_4\\
C_1+C_2+C_4+C_{12}\\
C_3+4C_4\\
C_3+C_4+C_{12}.
\end{array}
\]  
\end{example}

\subsection{Reduction}\label{sec:reduction}

We say that an instance $(A,B)$ is \EM{compact} if $\gcd L(A,B)=1$. This condition is used in Lemma~\ref{lem:partition2} to split non-basic instances, but in this section we show that every instance can be reduced to an ``equivalent'' compact instance, which can then be splitted. For this reduction, we need two operations. 

\medskip
The first operation is the cycle length division. The \EM{cycle length division} of a sum of cycles $A$ by a positive integer $p$, denoted \EM{$A\d p$}, is the sum of cycles obtained from $A$ by deleting every cycle whose length is not a multiple of $p$, and by dividing by $p$ the length of the remaining cycles; in other words: for all $a\geq 1$, 
\[
(A\d p)(a)=A(pa).
\]
Note that $L(A\d p)=L(A)/p$ and if  $p\mid L(A)$ then $L(A)=p L(A\d p)$. 

\begin{example}
\[
(2C_1+3C_3+5C_4+7C_6)\d 3=3C_1+7C_2.
\]
\end{example}

The second operation is the contraction. The \EM{contraction} of a sum of cycles $A$ by a positive integer $p$, denoted \EM{$A\dd p$}, is inductively defined as follows. Firstly, $A\dd 1=A$. Secondly, if $p$ is prime then, for all $a\geq 1$, 
\begin{equation}\label{eq:dd}
(A\dd p)(a)=
\left\{
\begin{array}{ll}
A(a)+pA(pa)&\textrm{if }p\nmid a\\
pA(pa)&\textrm{otherwise.}
\end{array}
\right.
\end{equation}
This operation transforms each cycle of length $pa$ into $p$ cycles of length $a$ (and thus keeps the number of vertices unchanged). Note that $L(A\dd p)$ is the set of integers $a$ such that either $a\in L(A)$ and $p\nmid a$ or $pa\in L(A)$. Finally, if $p$ is composite, we take the smallest prime $q$ that divides $p$ and set 
\[
A\dd p=(A\dd p/q)\dd q.
\]

\begin{example}
\begin{eqnarray*}
(2C_1+3C_3+5C_4+ 7C_6)\dd 3&=&2C_1+9C_1+5C_4+21C_2\\&=&11C_1+5C_4+21C_2.
\end{eqnarray*}
\begin{eqnarray*}
(2C_1+3C_3+5C_4+ 7C_6)\dd 6&=&((2C_1+3C_3+5C_4+ 7C_6)\dd 3)\dd 2\\
&=&(11C_1+5C_4+21C_2)\dd 2\\
&=&(11C_1+10C_2+42C_1)\\
&=&(53C_1+10C_2).
\end{eqnarray*}
\end{example}

Note that, for every positive integers $p,q$, we have 
\begin{equation}\label{eq:pq}
(A\m p)\m q=A\m pq,\quad (A\d p)\d q=A\d pq,\quad (A\dd p)\dd q=A\dd pq.
\end{equation}
The first two equalities are obvious. The third results from the following easy to check commutativity property: $(A\dd p)\dd q=(A\dd q)\dd p$ when $p$ and $q$ are~primes. 

\medskip
We can finally define the reduction operation. The \EM{reduction} of a consistent instance $(A,B)$~is 
\[
(A\dd \gcd L(A,B),B\d \gcd L(A,B)).
\]
The main property of this reduction, stated below, is that it preserves consistency and results in compact instances without loss of information concerning the solutions. 

\begin{lemma}\label{lem:instance-full-reduction}
Let $(A,B)$ be a consistent instance. Its reduction $(A',B')$ is consistent and satisfies 
\[
L(A',B')=L(A,B)/\gcd L(A,B)\quad\textrm{and}\quad \Sol(A,B)=\Sol(A',B')\m \gcd L(A,B).
\]
\end{lemma}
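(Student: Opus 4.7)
The proof hinges on a single multiplicative identity, valid for all sums of cycles $A,Y$ and every positive integer $d$:
\[
A\cdot(Y\m d) = ((A\dd d)\cdot Y)\m d.
\]
I would establish this identity first and then derive every claim of the lemma from it.

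To prove the identity, distributivity of the product together with additivity of $\m$ and $\dd$ in their left argument reduces the task to $A=C_a$, $Y=C_y$. Decomposing $d$ into primes and using \eqref{eq:pq}, an easy induction further reduces to $d=p$ prime. I would then split on whether $p\mid a$. If $p\nmid a$, then $C_a\dd p=C_a$, so the claim is $C_a\cdot C_{py}=(C_a\cdot C_y)\m p$, which boils down to the valuation identities $a\land py=a\land y$ and $a\lor py=p(a\lor y)$. If $p\mid a$, then $C_a\dd p=pC_{a/p}$, so the claim is $C_a\cdot C_{py}=(pC_{a/p}\cdot C_y)\m p$, which boils down to $a\land py=p(a/p\land y)$ and $a\lor py=p(a/p\lor y)$. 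This prime-case verification is the only genuine calculation and is where I expect the real work to lie; everything else is bookkeeping.

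With the identity in hand, set $A'=A\dd d$ and $B'=B\d d$ with $d=\gcd L(A,B)$. Since $L(A)\lor L(A,B)=L(B)$ by consistency, each $\ell\in L(B)$ has the form $a\lor x$ with $x\in L(A,B)$, and hence $d\mid x\mid\ell$, so $d\mid L(B)$ and $B=B'\m d$. For any $X\in\Sol(A,B)$, \eqref{eq:support} gives $L(X)\subseteq L(A,B)$, whence $d\mid L(X)$ and $X=X'\m d$ with $X'=X\d d$. Applying the key identity and using that $\m d$ is injective (on sums of cycles with lengths in $d\N$), we obtain $AX=B$ iff $A'X'=B'$, establishing the bijection $\Sol(A,B)=\Sol(A',B')\m d$.

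For the remaining support and consistency claims, apply the key identity to $Y=\sum_{m\in M}C_m$ and take supports on both sides to get the set-theoretic consequence
\[
L(A)\lor dM \;=\; d\,(L(A')\lor M),\qquad M\subseteq\N.
\]
Since $|A'|=|A|$ (contraction preserves size) and $|B'|=|B|/d$, we have $x'\leq|B'|/|A'|$ iff $dx'\leq|B|/|A|$, and by the displayed equation with $M=\{x'\}$, $L(A')\lor x'\subseteq L(B')$ iff $L(A)\lor dx'\subseteq L(B)$. Therefore $x'\in L(A',B')$ iff $dx'\in L(A,B)$, which together with $d\mid L(A,B)$ yields $L(A',B')=L(A,B)/d$. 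Finally, taking $M=L(A',B')$ so that $dM=L(A,B)$ and using consistency of $(A,B)$, the displayed equation gives $d\,(L(A')\lor L(A',B'))=L(A)\lor L(A,B)=L(B)=dL(B')$, so $L(A')\lor L(A',B')=L(B')$, i.e., $(A',B')$ is consistent.
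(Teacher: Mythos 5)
Your proof is correct, and it reorganizes the argument around a cleaner statement than the one the paper uses. Your key identity $A\cdot(Y\m d)=((A\dd d)\cdot Y)\m d$ is essentially Lemma~\ref{lem:product-reduction} with $X=Y\m p$, but stated for an arbitrary positive integer $d$ and without the hypothesis $p\mid L(X)$, which makes both directions of the solution bijection immediate (the paper's backward direction needs an extra argument that $p\mid L(AX)$ before it can invert $\d p$). You also prove the identity differently: by bilinearity you reduce to single cycles and, via \eqref{eq:pq}, to a prime, where it becomes a short $p$-adic valuation check on $C_aC_b=(a\land b)C_{a\lor b}$; the paper instead manipulates the convolution formula \eqref{eq:mult} directly, splitting the sum according to the definition of $\dd p$. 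Finally, you obtain $L(A',B')=L(A,B)/d$ and consistency of $(A',B')$ as formal consequences of the identity combined with \eqref{eq:LAB} and the size bookkeeping $|A'|=|A|$, $|B'|=|B|/d$, whereas the paper proves these by element-chasing with a case analysis on whether $p$ divides $a$ (Lemma~\ref{lem:instance-reduction}) and then iterates over the prime factorization of $\gcd L(A,B)$; in your version the prime-by-prime induction is confined to the proof of the identity itself. What your route buys is a single reusable algebraic fact from which solutions, support and consistency all follow mechanically; what the paper's route buys is that each prime-level reduction step is itself a standalone lemma, which is what its decomposition algorithm (Lemma~\ref{lem:decomposition}) manipulates. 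Two small points worth making explicit if you write this up: $L(A,B)\neq\emptyset$ (so $\gcd L(A,B)$ is defined) follows from consistency and $B\neq\emptyset$, and the inversion facts $(Z\m d)\d d=Z$ and, when $d\mid L(B)$, $(B\d d)\m d=B$ that you use for injectivity of $\m d$ and for $B=B'\m d$ are exactly Lemma~\ref{lem:inversion}.
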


Before proving this lemma, we illustrate it with an example. 

\begin{example}\label{ex:reduction}
The support of $(C_6,8C_{12})$ is $\{4,12\}$ and $\lcm\{4,12\}=4$. We have 
\[
C_6\dd 4=(C_6\dd 2)\dd 2=2C_3\dd 2=2C_3
\]
and 
\[
8C_{12}\d 4=8C_3.
\]
Thus the reduction of $(C_6,8C_{12})$ is $(2C_3,8C_3)$. Hence the support of the reduction is $\{1,3\}=\{4,12\}/4$, as predicted by Lemma~\ref{lem:instance-full-reduction}, so the reduction is compact, and we have 
\begin{eqnarray*}
2C_3X=8C_3&\iff&  2C_3\big(X(1)C_1+X(3)C_3\big)=8C_3\\
&\iff&  2X(1)C_3+6X(3)C_3=8C_3\\
&\iff &2X(1)+6X(3)=8.
\end{eqnarray*}
Thus each solution $X$ corresponds to a partition of $8$ with parts in $\{2,6\}$: these are $2+2+2+2+2$ and $2+6$, giving the following two solutions:
\[
\begin{array}{l}
4C_1\\
C_1+C_3.
\end{array}
\]
By Lemma \ref{lem:instance-full-reduction}, we have $\Sol(C_6,8C_{12})=\Sol(2C_3,8C_3)\m 4$. Hence the solutions of $(C_6,8C_{12})$ are 
\[
\begin{array}{lll}
(4C_1)\m 4&=&4C_4\\
(C_1+C_3)\m 4&=&C_4+C_{12}
\end{array}
\]
which is consistent with the direct computation given in Ex.~\ref{ex:split}. 
\end{example}

The rest of this subsection is devoted to the proof of Lemma \ref{lem:instance-full-reduction}. We first show that the cycle length division $\d$ is the inverse of the cycle length multiplication $\m$.

\begin{lemma}\label{lem:inversion}
Let $A$ be a sum of cycles and let $p$ be a positive integer. Then $(A\m p)\d p=A$, and if $p\mid L(A)$ then $(A\d p)\m p=A$.
\end{lemma}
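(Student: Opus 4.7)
The plan is to prove both equalities by unpacking the definitions of $\m$ and $\d$ and comparing the values on each integer $a \geq 1$. This is essentially a one-line verification in each direction, with the only subtlety in the second equality being the use of the hypothesis $p \mid L(A)$.

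First I would prove $(A\m p)\d p = A$. By definition of $\d$, for every $a \geq 1$ we have $((A\m p)\d p)(a) = (A\m p)(pa)$. Since $p \mid pa$, the definition of $\m$ gives $(A\m p)(pa) = A(pa/p) = A(a)$. So $((A\m p)\d p)(a) = A(a)$ for all $a$, which proves the equality.

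Next I would prove that if $p \mid L(A)$ then $(A\d p)\m p = A$. For every $a \geq 1$ we compute $((A\d p)\m p)(a)$ using the definition of $\m$: if $p \mid a$ then $((A\d p)\m p)(a) = (A\d p)(a/p) = A(p \cdot a/p) = A(a)$, while if $p \nmid a$ then $((A\d p)\m p)(a) = 0$. To finish, we need $A(a) = 0$ in the second case; this is exactly where the hypothesis $p \mid L(A)$ enters. Indeed, if $a \geq 1$ satisfies $A(a) > 0$ then $a \in L(A)$, so $p \mid a$ by assumption. Contrapositively, $p \nmid a$ forces $A(a) = 0$, matching the value of $((A\d p)\m p)(a)$.

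There is no real obstacle here; the statement is a pure bookkeeping check and will occupy only a few lines. The only thing to be careful about is not to forget the hypothesis $p \mid L(A)$ in the second claim — without it, $(A\d p)\m p$ only recovers the sub-sum of cycles of $A$ whose lengths are divisible by $p$.
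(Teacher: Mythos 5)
Your proof is correct and follows essentially the same route as the paper's: a pointwise verification $((A\m p)\d p)(a)=(A\m p)(pa)=A(a)$, and for the second equality a case split on $p\mid a$ versus $p\nmid a$, using the hypothesis $p\mid L(A)$ exactly where the paper does to force $A(a)=0$ in the second case.
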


\begin{proof}
For all $a\geq 1$, we have $((A\m p)\d p)(a)=(A\m p)(pa)=A(a)$. Suppose that $p\mid L(A)$ and let $a\geq 1$. If $p\nmid a$ then $A(a)=0$ and $((A\d p)\m p)(a)=0$ (since $p\mid L((A\d p)\m p)$). If $p\mid a$ then $((A\d p)\m p)(a)=(A\d p)(a/p)=A(a)$.  
\end{proof}

The key argument follows. 

\begin{lemma}\label{lem:product-reduction}
Let $A,X$ be sums of cycles, and suppose that $p\mid L(X)$ for some prime $p$. Then 
\[
(A\dd p)(X\d p)=(AX)\d p.
\]
\end{lemma}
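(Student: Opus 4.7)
The approach is to compare, for each $\ell\ge 1$, the multiplicity of $C_\ell$ on both sides using the product formula \eqref{eq:mult}. Expanding the right-hand side gives $((AX)\d p)(\ell)=(AX)(p\ell)=\tfrac{1}{p\ell}\sum_{a\lor x=p\ell}aA(a)xX(x)$; since $p\mid L(X)$, every nonzero contribution has $x=py$ for some $y\ge 1$ with $X(py)=(X\d p)(y)$, so this rewrites as $\tfrac{1}{\ell}\sum_{a,y:\,a\lor py=p\ell}aA(a)\,y(X\d p)(y)$. The left-hand side expands directly to $\tfrac{1}{\ell}\sum_{a',y:\,a'\lor y=\ell}a'(A\dd p)(a')\,y(X\d p)(y)$. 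It therefore suffices to prove the arithmetic identity
\[
\sum_{a':\,a'\lor y=\ell} a'(A\dd p)(a')\;=\;\sum_{a:\,a\lor py=p\ell} aA(a)\qquad\text{for all } y,\ell\ge 1.
\]

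To prove this identity, I will split the left-hand sum using the definition \eqref{eq:dd}: the contribution $A(a')$ appears only when $p\nmid a'$, while the contribution $pA(pa')$ appears for every $a'$. The first yields $\sum_{a':\,a'\lor y=\ell,\;p\nmid a'}a'A(a')$, and substituting $a=pa'$ in the second (a bijection from all $a'\ge 1$ onto the integers $a$ with $p\mid a$) yields $\sum_{a:\,p\mid a,\;(a/p)\lor y=\ell}aA(a)$.

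Next I match these two pieces with the ``$p\nmid a$'' and ``$p\mid a$'' halves of the right-hand sum using the elementary identities
\[
p\nmid a\;\Longrightarrow\;a\lor py=p(a\lor y),\qquad p\mid a\;\Longrightarrow\;a\lor py=p\bigl((a/p)\lor y\bigr),
\]
both immediate from case analysis on $\nu_p$. The first gives $a\lor py=p\ell\Leftrightarrow a\lor y=\ell$ when $p\nmid a$, and the second gives $a\lor py=p\ell\Leftrightarrow(a/p)\lor y=\ell$ when $p\mid a$. Substituting these equivalences into the index sets makes the two partial sums from the left side coincide, respectively, with the $p\nmid a$ and $p\mid a$ parts of the right side; summing them yields the identity.

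The one delicate step is the LCM identity in the case $p\mid a$: the naive formula $a\lor py=p(a\lor y)$ fails as soon as $\nu_p(a)>\nu_p(y)$, and using the correct factorisation $a\lor py=p((a/p)\lor y)$ is exactly what pairs the ``$p\mid a$'' terms on the right with the $pA(pa')$-contribution on the left. Everything else is routine bookkeeping on finite sums.
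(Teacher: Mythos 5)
Your proof is correct and follows essentially the same route as the paper's: both compare multiplicities via the product formula \eqref{eq:mult}, split according to the two cases in the definition of $A\dd p$ (equivalently $p\mid a$ versus $p\nmid a$), reindex by $x=py$ and $a=pa'$, and invoke the same lcm identities $a\lor py=p(a\lor y)$ for $p\nmid a$ and $a\lor py=p((a/p)\lor y)$ for $p\mid a$. Your reorganization into a per-$y$ coefficient identity is only a cosmetic difference from the paper's direct transformation of $p\ell\,(A\dd p)(X\d p)(\ell)$ into $p\ell\,(AX)(p\ell)$.
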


\begin{proof}
Let $A'=A\dd p$ and $X'=X\d p$. We have to prove that $A'X'=AX\d p$, that is, for all $\ell\geq 1$, $A'X'(\ell)=(AX\d p)(\ell)=AX(p\ell)$. Let us fix $\ell\geq 1$. We have 
\[
p\ell A'X'(\ell)
	=\sum_{\substack{a,x\\a\lor x=\ell}} paA'(a)xX'(x)
	=\sum_{\substack{a,x\\a\lor x=\ell}} aA'(a)pxX(px). 
\]
Denoting by $\Omega$ the set of couples $(a,x)\in\N^2$ with $p\mid x$ and $a\lor \frac{x}{p}=\ell$, we obtain 
\[
p\ell A'X'(\ell)
	=\sum_{(a,x)\in\Omega} aA'(a)xX(x). 
\] 
By splinting the sum according to the definition of $A'$ we obtain
\begin{eqnarray*}
p\ell A'X'(\ell)
	&=&\sum_{\substack{(a,x)\in\Omega\\p\nmid a}} \big(aA(a)+paA(pa)\big)xX(x)+
	\sum_{\substack{(a,x)\in\Omega\\ p\mid a}} paA(pa)xX(x)\\
	&=&\sum_{\substack{(a,x)\in\Omega\\ p\nmid a}} aA(a)xX(x)
	+\sum_{\substack{(a,x)\in\Omega\\ p\nmid a}} paA(pa)xX(x)
	+\sum_{\substack{(a,x)\in\Omega\\ p\mid a}} paA(pa)xX(x). 
\end{eqnarray*}
Denoting by $\Omega'$ the set of $(a,x)\in\N^2$ with $p\mid x$, $p\mid a$ and $\frac{a}{p}\lor \frac{x}{p}=\ell$, we obtain 
\[
p\ell A'X'(\ell)
	=\sum_{\substack{(a,x)\in\Omega\\ p\nmid a}} aA(a)xX(x)
	+\sum_{\substack{(a,x)\in\Omega'\\p\nmid \frac{a}{p}}} aA(a)xX(x)
	+\sum_{\substack{(a,x)\in\Omega'\\p\mid \frac{a}{p}}} aA(a)xX(x). 
\] 
If $p\nmid a$ then $a\lor \frac{x}{p}=\ell$ iff $a\lor x=p\ell$; and $\frac{a}{p}\lor \frac{x}{p}=\ell$ iff $a\lor x=p\ell$. Consequently
\[
p\ell A'X'(\ell)
	=\sum_{\substack{a,x\\p\mid x\\ a\lor x=p\ell}} aA(a)xX(x).
\] 
Since $p\mid L(X)$, if $p\nmid x$ then $X(x)=0$, so
\[
p\ell A'X'(\ell)
	=\sum_{\substack{a,x\\ a\lor x=p\ell}} aA(a)xX(x)=p\ell AX(p\ell). 
\] 
Thus $A'X'(\ell)=AX(p\ell)$ for all $\ell\geq 0$, as desired. 
\end{proof}  

We obtain that every non compact instance can be partially reduced by a prime. 

\begin{lemma}\label{lem:instance-reduction}
Let $(A,B)$ be a consistent instance, and suppose $p\mid L(A,B)$ for some prime $p$. Then $(A\dd p,B\d p)$ is consistent with support $L(A,B)/p$, and 
\begin{equation}\label{eq:instance-reduction}
\Sol(A,B)=\Sol(A\dd p,B\d p)\m p.
\end{equation}
\end{lemma}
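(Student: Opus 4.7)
The plan is to prove the three assertions in order: first $L(A\dd p,B\d p)=L(A,B)/p$, then consistency of $(A\dd p,B\d p)$, and finally the solution bijection $\Sol(A,B)=\Sol(A\dd p,B\d p)\m p$. The engine behind all three will be the elementary identity
\[
a\lor (py)=p(a''\lor y)\qquad\text{whenever } a''=a\text{ with }p\nmid a,\text{ or } a=pa'',
\]
which one checks by comparing $p$-adic valuations (the factor $p$ in front on the right-hand side accounts exactly for the factor $p$ appearing in $py$ when $p\nmid a$, and is already absorbed into $a$ when $p\mid a$).

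First I would gather the preliminary facts. Since $(A,B)$ is consistent, $L(B)=L(A)\lor L(A,B)$; as $p\mid L(A,B)$, every $b\in L(B)$ is of the form $a\lor x$ with $p\mid x$, so $p\mid L(B)$. This yields $|B\d p|=|B|/p$, and since the contraction preserves vertex counts we also have $|A\dd p|=|A|$, so the size bound in the definition of the support is preserved: $y\leq |B\d p|/|A\dd p|$ iff $py\leq |B|/|A|$. From \eqref{eq:support}, any $X\in\Sol(A,B)$ satisfies $L(X)\subseteq L(A,B)$, hence $p\mid L(X)$, which is the condition needed to apply Lemma~\ref{lem:product-reduction} and Lemma~\ref{lem:inversion}.

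Next, for the support equality, I would proceed by double inclusion, exploiting that $L(A\dd p)=\{a'\in L(A):p\nmid a'\}\cup \{a':pa'\in L(A)\}$. Given $x\in L(A,B)$, set $y=x/p$; for any $a'\in L(A\dd p)$ choose $a\in L(A)$ as $a=a'$ or $a=pa'$ accordingly, and the identity above converts $a\lor x=a\lor(py)=p(a'\lor y)\in L(B)$ into $a'\lor y\in L(B\d p)$, giving $y\in L(A\dd p,B\d p)$. Conversely, given $y\in L(A\dd p,B\d p)$ and $a\in L(A)$, pick $a'\in L(A\dd p)$ to be $a$ (if $p\nmid a$) or $a/p$ (if $p\mid a$); then $a'\lor y\in L(B\d p)$ lifts to $a\lor(py)=p(a'\lor y)\in L(B)$, so $py\in L(A,B)$. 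Consistency of $(A\dd p,B\d p)$ is obtained by the same device: for $b'\in L(B\d p)$, consistency of $(A,B)$ provides $pb'=a\lor x$ with $a\in L(A)$ and $x\in L(A,B)$, and since $p\mid x$ setting $y=x/p$ and choosing $a'$ as above yields $b'=a'\lor y$ in $L(A\dd p)\lor L(A\dd p,B\d p)$.

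Finally, for the solution equation, I would combine the two black-box lemmas. In one direction, $X\in\Sol(A,B)$ satisfies $p\mid L(X)$, so Lemma~\ref{lem:product-reduction} gives $(A\dd p)(X\d p)=(AX)\d p=B\d p$, and Lemma~\ref{lem:inversion} gives $X=(X\d p)\m p$, placing $X$ in $\Sol(A\dd p,B\d p)\m p$. In the other direction, for $Y\in\Sol(A\dd p,B\d p)$, set $X=Y\m p$; Lemma~\ref{lem:inversion} gives $X\d p=Y$ and $p\mid L(X)$, so Lemma~\ref{lem:product-reduction} yields $(AX)\d p=(A\dd p)Y=B\d p$. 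Since $L(AX)=L(A)\lor L(X)$ is contained in $p\mathbb{N}$ and $L(B)\subseteq p\mathbb{N}$, equality of the $\d p$-reductions lifts to $AX=B$. The only mildly delicate step is this last lift, and the only genuine obstacle throughout is bookkeeping the two-case structure of $L(A\dd p)$, both cases being unified by the identity $a\lor(py)=p(a''\lor y)$ stated at the start.
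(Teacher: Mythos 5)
Your proof is correct and follows essentially the same route as the paper's: the solution equality is obtained by combining Lemma~\ref{lem:product-reduction} and Lemma~\ref{lem:inversion} in both directions (with the lift $AX=((AX)\d p)\m p=(B\d p)\m p=B$ justified by $p\mid L(AX)$ and $p\mid L(B)$), and the support equality and consistency are proved by the same two-case analysis on whether $p$ divides $a\in L(A)$. Your explicit identity $a\lor(py)=p(a'\lor y)$ and the explicit check of the size bound $y\leq |B\d p|/|A\dd p|$ are just clean packagings of steps the paper treats implicitly.
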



\begin{proof}
Let $A'=A\dd p$ and $B'=B\d p$. We first prove \eqref{eq:instance-reduction}. Let $X$ be a solution of $(A,B)$. By  \eqref{eq:support} we have $L(X)\subseteq L(A,B)$ and since $p\mid L(A,B)$ we have $p\mid L(X)$. Hence, by Lemma~\ref{lem:product-reduction}, $A'(X\d p)=AX\d p=B\d p=B'$, that is, $X\d p$ is a solution of $(A',B')$. Since $p\mid L(X)$, by Lemma 
\ref{lem:inversion} we have $(X\d p)\m p=X$ and thus $X\in \Sol(A'\,B')\m p$. 

\medskip
We now prove the converse direction. Let $X'$ be a solution of $(A',B')$, and let $X=X'\m p$. We have to prove that $X$ is a solution of $(A,B)$. By Lemma  \ref{lem:inversion} we have $X\d p=X'$ thus $A'(X\d p)=B'=B\d p$. Since $p\mid L(X)$, by Lemma~\ref{lem:product-reduction}, we have $A'(X\d p)=(AX)\d p$. Thus $(AX)\d p=B\d p$. Since $p\mid L(A,B)$ and $(A,B)$ is consistent, we have $p\mid L(B)$. Since $X=X'\m p$ we obviously have $p\mid L(X)$. Thus $p$ divides $L(A)\lor L(X)=L(AX)$. Using Lemma~\ref{lem:inversion} we obtain $AX=((AX)\d p)\m p=(B\d p)\m p=B$. Thus $X$ is a solution of $(A,B)$. This proves~\eqref{eq:instance-reduction}.

\medskip
We now prove that $L(A,B)/p\subseteq L(A',B')$. For that, we fix $x\in L(A,B)/p$, and we prove that $a\lor x$ is in $L(B')$ for any $a\in L(A')$. Indeed, if $pa\in L(A)$ then $pa\lor px=b$ for some $b\in L(B)$ and we deduce that $a\lor x=b/p\in L(B')$. If $pa\not\in L(A)$, then $p\nmid a$ and $a\in L(A)$. Thus $a\lor px=b$ for some $b\in L(B)$ and since $p\nmid a$ we deduce that $a\lor x=b/p\in L(B')$. 

\medskip
We now prove the converse inclusion. For that, we fix $x\in L(A',B')$, and we prove that $a\lor px$ is in $L(B)$ for any $a\in L(A)$. Indeed, if $p\mid a$ then $a/p\in L(A')$ and thus $(a/p)\lor x=b$ for some $b\in L(B')$ so that $a\lor px=pb\in L(B)$. If $p\nmid a$ then $a\in L(A')$ and thus $a\lor x=b$ for some $b\in L(B')$, and since $p\nmid a$ we have $a\lor px=pb\in L(B)$.

\medskip
We finally prove that $(A',B')$ is consistent. Since $L(A')\lor L(A',B')\subseteq L(B')$, we only have to prove the converse inclusion. Let $b\in L(B')$. Then $pb\in L(B)$ and since $(A,B)$ is consistent, there is $a\in L(A)$ and $x\in L(A,B)$ with $a\lor x=pb$. Hence $x/p\in L(A',B')$. If $p\mid a$ then $(a/p)\lor (x/p)=b$ and we are done since $a/p\in L(A')$. If $p\nmid a$ then $a\lor (x/p)=b$ and we are done since $a\in L(A')$. 
\end{proof}

The proof of Lemma \ref{lem:instance-full-reduction} is then obtained by applying several times the previous lemma.

\begin{proof}[{\bf Proof of Lemma \ref{lem:instance-full-reduction}}]
Let $d=\gcd L(A,B)$. Suppose that $d>1$ since otherwise there is nothing to prove. Let us write $d$ as the product of $k\geq 1$ primes,  not necessarily distinct, say $d=p_1p_2\dots p_k$ with $p_1\leq p_2\leq \cdots\leq p_k$. Let $A_0=A$, $B_0=B$ and, for $1\leq\ell\leq k$, let  $A_\ell=A_{\ell-1} \dd p_\ell$ and $B_\ell=B_{\ell-1} \d p_\ell$. By Lemma \ref{lem:instance-reduction}, $(A_\ell,B_\ell)$ is a consistent instance and $\Sol(A_{\ell-1},B_{\ell-1})=\Sol(A_\ell,B_\ell)\m p_\ell$. By \eqref{eq:pq} we have $A_k=A\dd d$, $B_k=B\dd d$, $L(A_k,B_k)=L(A,B)/d$ and $\Sol(A,B)=\Sol(A_k,B_k)\m d$.
\end{proof}

\subsection{Proof of the decomposition lemma}

In this subsection, we finally prove Lemma~\ref{lem:main}. It is actually more convenient to prove something stronger. We start with a definition. Let $(A,B)$ be a consistent instance. A \EM{decomposition} of $(A,B)$ is a list $\L$ of triples $(A_i,B_i,p_i)$, $1\leq i\leq k$, such that 
\begin{itemize}
\item[$\bullet$] $(A_i,B_i)$ is a compact and consistent instance, and $p_i$ is a positive integer,
\item[$\bullet$] $|A_i|=|A|$ and $\lcm L(A_i)\mid \lcm L(A)$,
\item[$\bullet$] $|B_1|+\cdots +|B_k|\leq |B|$, 
\item[$\bullet$] $\Sol(A,B)=(\Sol(A_1,B_1)\m p_1)+\cdots +(\Sol(A_k,B_k)\m p_k)$.
\end{itemize}
We call $k$ the \EM{length} of $\L$; note that by the third point, $k\leq |B|$. Furthermore, we say that $\L$ is \EM{basic} if $(A_i,B_i)$ is basic for all $1\leq i\leq k$. We will prove that we can compute in $O(|A||B|^2)$ a basic decomposition, which clearly implies Lemma~\ref{lem:main}. For that we first prove that if $(A,B)$ has a non-basic decomposition, we can obtain a longer decomposition by partitioning a non-basic instance (Lemma~\ref{lem:partition2}) and then reducing its parts (Lemma \ref{lem:instance-full-reduction}). 

\begin{lemma}\label{lem:decomposition}
There is an algorithm that, given a consistent instance $(A,B)$ and a non-basic decomposition $\L$ of $(A,B)$ of length $k$, computes in $O(|B|^2)$ a decomposition $\L'$ of $(A,B)$ of length $k+1$.
\end{lemma}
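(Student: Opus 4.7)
}
The plan is to locate a non-basic instance inside $\L$, split it via Lemma~\ref{lem:partition2}, reduce each half via Lemma~\ref{lem:instance-full-reduction}, and substitute the two resulting triples for the original one, which raises the length of the decomposition from $k$ to $k+1$.

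Since $\L=((A_1,B_1,p_1),\dots,(A_k,B_k,p_k))$ is non-basic, I would first scan the list to find an index $i$ with $(A_i,B_i)$ non-basic (this exists by assumption). By the definition of a decomposition, $(A_i,B_i)$ is consistent and compact, so Lemma~\ref{lem:partition2} applies and yields a consistent perfect split $(A_i,B_i^{(1)}),(A_i,B_i^{(2)})$ with $B_i=B_i^{(1)}+B_i^{(2)}$ and
\[
\Sol(A_i,B_i)=\Sol(A_i,B_i^{(1)})+\Sol(A_i,B_i^{(2)}).
\]
Each half is consistent but possibly non-compact, so for $j\in\{1,2\}$ I set $d_j=\gcd L(A_i,B_i^{(j)})$ and reduce to $\widehat A_i^{(j)}=A_i\dd d_j$ and $\widehat B_i^{(j)}=B_i^{(j)}\d d_j$. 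Lemma~\ref{lem:instance-full-reduction} guarantees that $(\widehat A_i^{(j)},\widehat B_i^{(j)})$ is compact and consistent and that $\Sol(A_i,B_i^{(j)})=\Sol(\widehat A_i^{(j)},\widehat B_i^{(j)})\m d_j$. Distributing $\m p_i$ over the sum of sets and applying \eqref{eq:pq} gives
\[
\Sol(A_i,B_i)\m p_i=\bigl(\Sol(\widehat A_i^{(1)},\widehat B_i^{(1)})\m p_i d_1\bigr)+\bigl(\Sol(\widehat A_i^{(2)},\widehat B_i^{(2)})\m p_i d_2\bigr),
\]
so defining $\L'$ as $\L$ with $(A_i,B_i,p_i)$ replaced by $(\widehat A_i^{(1)},\widehat B_i^{(1)},p_i d_1)$ and $(\widehat A_i^{(2)},\widehat B_i^{(2)},p_i d_2)$ preserves the identity $\Sol(A,B)=\sum_j \Sol(A_j',B_j')\m p_j'$ and has length $k+1$.

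It then remains to verify the remaining decomposition constraints. Compactness and consistency of the new instances come from Lemma~\ref{lem:instance-full-reduction}. Vertex counts of the $A$-parts are preserved, $|\widehat A_i^{(j)}|=|A_i|=|A|$, because $\dd p$ transforms every cycle of length $pa$ into $p$ cycles of length $a$. The lcm condition $\lcm L(\widehat A_i^{(j)})\mid \lcm L(A_i)\mid \lcm L(A)$ follows by inducting on the prime factorisation of $d_j$: the definition \eqref{eq:dd} shows that every element of $L(A\dd p)$ either lies in $L(A)$ or is of the form $a/p$ with $pa\in L(A)$, hence divides $\lcm L(A)$. Finally, the total $B$-size is non-increasing: $\d q$ deletes cycles whose length is not a multiple of $q$ and shortens the others, so $|\widehat B_i^{(j)}|\le |B_i^{(j)}|$ and $|\widehat B_i^{(1)}|+|\widehat B_i^{(2)}|\le |B_i^{(1)}|+|B_i^{(2)}|=|B_i|$, leaving the sum $\sum_j |B_j'|\le |B|$ intact.

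For the $O(|B|^2)$ complexity, I would use that the list has length at most $|B|$. Detecting that $(A_j,B_j)$ is non-basic reduces to checking, for each $b\in L(B_j)$, whether $b\mid\lcm L(A_j)$, equivalently whether $\nu_p(b)\le\max_{a\in L(A_j)}\nu_p(a)$ for every prime $p\mid b$; this runs in $O(|A_j||B_j|)$ per index and totals $O(|A|\cdot|B|)=O(|B|^2)$ over the scan. Once the bad index $i$ together with the witness $(b,p)$ of Lemma~\ref{lem:partition2} are known, partitioning $B_i$ by $\nu_p(\cdot)=\nu_p(b)$, computing $d_1,d_2$, and applying $\dd$ and $\d$ all operate in $O(|B_i|)$. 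The only part that requires care is the bookkeeping through the set operations $+$ and $\m$ on solution sets, but this reduces to the elementary identity $(X+Y)\m p=(X\m p)+(Y\m p)$ combined with \eqref{eq:pq}, so I expect no serious obstacle beyond the careful verification already outlined.
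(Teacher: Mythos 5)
Your proposal is correct and follows essentially the same route as the paper's proof: locate a non-basic (compact, consistent) instance in $\L$, split it with Lemma~\ref{lem:partition2}, reduce each half with Lemma~\ref{lem:instance-full-reduction}, and replace the old triple by the two new triples with multipliers $p_id_1,p_id_2$, checking the size, lcm and solution-set conditions exactly as the paper does. The complexity accounting also matches (scan in $O(|B|^2)$, local operations within budget), so there is nothing to add.
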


\begin{proof}
The algorithm is as follows. Let $(A_1,B_1,p_1),\dots,(A_k,B_k,p_k)$ be the triples of $\L$. We can test if $(A_i,B_i,p_i)$ is basic or not in $O(|A_i|+|B_i|)=O(|B_i|)$. Since $k\leq |B|$ (by the third point of the definition of a decomposition), and since $\L$ is not basic, we can thus find a non-basic instance $(A_i,B_i)$ in $O(|B|^2)$. Since $(A_i,B_i)$ is compact and consistent, by Lemma~\ref{lem:partition2}, it has a consistent perfect split, say $(A_i,B_{i1}),(A_i,B_{i2})$. For $j=1,2$, we compute in $O(|B|^2)$ the integer $p_{ij}=\gcd L(A_i,B_{ij})$ and the reduction $(A_{ij},B'_{ij})=(A_i\dd p_{ij},B_{ij}\d p_{ij})$ of $(A_i,B_{ij})$. Finally, we output the list $\L'$ of length $k+1$ obtained from $\L$ by deleting $(A_i,B_i,p_i)$ and adding $(A_{i1},B'_{i1},p_ip_{i1})$ and $(A_{i2},B'_{i2},p_ip_{i2})$. So the running time is $O(|B|^2)$. 

\medskip
Let us prove that $\L'$ is a decomposition. By Lemma \ref{lem:instance-full-reduction}, $(A_{ij},B'_{ij})$ is compact and consistent. Furthermore, $|A_{ij}|=|A_i|=|A|$ and since $A_{ij}$ is a contraction of $A_i$, each member of $L(A_{ij})$ divides some member of $L(A_i)$, thus $\lcm L(A_{ij})$ divides $\lcm L(A_i)$, which divides $\lcm L(A)$. Thus $\lcm L(A_{ij})$ divides $\lcm L(A)$. Next, since $|B'_{i1}|+|B'_{i2}|\leq |B_{i1}|+|B_{i2}|=|B_i|$, the third point of the definition of a decomposition is preserved. Finally, by Lemma~\ref{lem:instance-full-reduction}, $\Sol(A_{i},B_{ij})=\Sol(A_{ij},B'_{ij})\m p_{ij}$. Since the split $(A_i,B_{i1}),(A_i,B_{i2})$ is perfect, we obtain 
\begin{eqnarray*}
\Sol(A_i,B_i)\m p_i&=&(\Sol(A_i,B_{i1})+\Sol(A_i,B_{i2}))\m p_i\\
&=&(\Sol(A_i,B_{i1})\m p_i) +(\Sol(A_i,B_{i2}) \m p_i)\\
&=&(\Sol(A_{i1},B'_{i1})\m p_ip_{i1}) +(\Sol(A_{i2},B'_{i2}) \m p_ip_{i2})
\end{eqnarray*}
and this proves that the last point of the definition of a decomposition is preserved. So $\L'$ is indeed a decomposition of $(A,B)$. 
\end{proof}

Iterating the previous lemma, we get the following, which implies Lemma~\ref{lem:main}. 
 
\begin{lemma}\label{lem:decompo2}
There is an algorithm that, given a consistent instance $(A,B)$, computes in $O(|B|^3)$ a basic decomposition of $(A,B)$. 
\end{lemma}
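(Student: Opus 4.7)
The plan is to iterate Lemma~\ref{lem:decomposition} starting from a carefully chosen initial decomposition, stopping as soon as every triple in the list is basic. The output will then be a basic decomposition as required.

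First I would build an initial decomposition $\mathcal{L}_0$ of length $1$. Let $d=\gcd L(A,B)$, let $(A',B')=(A\dd d, B\d d)$ be the reduction of $(A,B)$, and consider the singleton list $\mathcal{L}_0=\big((A',B',d)\big)$. By Lemma~\ref{lem:instance-full-reduction}, $(A',B')$ is consistent, its support is $L(A,B)/d$, which has $\gcd$ equal to $1$, so $(A',B')$ is compact; moreover $\Sol(A,B)=\Sol(A',B')\m d$. The condition $|A'|=|A|$ holds because contracting a sum of cycles preserves the number of vertices; $\lcm L(A')$ divides $\lcm L(A)$ because every length in $L(A')$ divides some length in $L(A)$ (this is clear from the definition of $\dd$); and $|B'|=|B|/d\leq |B|$. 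So $\mathcal{L}_0$ is indeed a decomposition of $(A,B)$. Computing $\mathcal{L}_0$ takes time $O(|A||B|)\subseteq O(|B|^2)$.

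Next I would iterate: while the current decomposition $\mathcal{L}_i$ is non-basic, I apply the algorithm of Lemma~\ref{lem:decomposition} to obtain a decomposition $\mathcal{L}_{i+1}$ of length one more, in time $O(|B|^2)$ per step. Being basic is testable by scanning through the at most $|B|$ triples and checking the divisibility condition $L(B_j)\subseteq \Div(\lcm L(A_j))$ in each, which costs $O(|B|^2)$ in total, so the test does not dominate the cost. The key point that makes the loop terminate is the third clause in the definition of a decomposition: since $|B_1|+\cdots+|B_k|\leq |B|$ and each $|B_j|\geq 1$, the length $k$ of any decomposition is at most $|B|$. Each iteration strictly increases the length, so at most $|B|-1$ iterations are performed before the resulting decomposition is basic.

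Putting it together, the algorithm returns a basic decomposition of $(A,B)$ after at most $|B|$ iterations, each of cost $O(|B|^2)$, giving the claimed $O(|B|^3)$ bound. The main obstacles are really bookkeeping rather than mathematical: one must argue that the termination criterion ($k\leq |B|$) is preserved and that testing ``basic'' does not blow up the complexity. Both follow directly from the definitions already in place and from Lemma~\ref{lem:decomposition}, so no further structural results are needed.
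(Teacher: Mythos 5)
Your proof is correct and follows essentially the same route as the paper's: start from the length-one decomposition $\big((A\dd d,\;B\d d,\;d)\big)$ with $d=\gcd L(A,B)$ (justified by Lemma~\ref{lem:instance-full-reduction}), then repeatedly apply Lemma~\ref{lem:decomposition} until the list is basic, with termination forced by the bound $k\leq|B|$ from the third clause of the definition of a decomposition. The paper phrases the iteration as a fixed sequence $\mathcal{L}_1,\dots,\mathcal{L}_{|B|}$ with a contradiction argument at the end, while you use a while-loop with an explicit running-time tally; these are the same argument.
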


\begin{proof}
The algorithm constructs recursively a list $\L_1,\dots,\L_{|B|}$ of decompositions of $(A,B)$, where the length of $\L_r$ is at most $r$, and output $\L_{|B|}$. First we compute $\L_1=\{(A\dd d,B\d d,d)\}$ where $d=\gcd L(A,B)$ in $O(|A||B|)$; by Lemma~\ref{lem:instance-full-reduction}, $\L_1$ is a decomposition of length one. Now, suppose that the decomposition $\L_r$ of length $k\leq r< |B|$ has already been computed. If $\L_r$ is basic, we set $\L_{r+1}=\L_r$. Otherwise, using Lemma~\ref{lem:decomposition}, we compute in $O(|B|^2)$ a decomposition $\L_{r+1}$ of length $k+1$. Hence the running time is $O(|B|^3)$. It remains to prove that $\L_{|B|}$ is basic. If $\L_r=\L_{r+1}$ for some $r<|B|$ then $\L_r$  is basic and $\L_s=\L_r$ for all $r<s\leq |B|$ thus $\L_{|B|}$ is basic. Otherwise, $\L_1,\dots,\L_{|B|}$ are all distinct thus the length of $\L_{|B|}$ is $|B|$. If $\L_{|B|}$ is not basic, by Lemma~\ref{lem:decomposition}, $(A,B)$ has a decomposition of length $|B|+1$, a contradiction. Thus $\L_{|B|}$ is basic.
\end{proof}

\begin{example}
Combining Ex. \ref{ex:split} and \ref{ex:reduction}, we get that the basic decomposition of 
\[
(C_6,3C_6+8C12)
\]
is 
\[
(2C_3,8C_3,4),(C_6,3C_6,1)
\]
and so 
\[
\Sol(C_6,3C_6+8C12)=(\Sol(2C_3,8C_3)\m 4)+\Sol(C_6,3C_6).
\] 
This can be easily checked from Ex. \ref{ex:split} and \ref{ex:reduction}. 
\end{example}

\section{Principal support}\label{sec:principal_support}

The \EM{principal support} of an instance $(A,B)$ is 
\[
\EM{L'(A,B)}=L(A,B)\setminus L(B).
\]
If $X$ is a solution of $(A,B)$, we have $L(X)\subseteq L(A,B)$. But actually, all the information on $X$ is contained on the principal support: if $X'$ is a solution of $(A,B)$ with $X'(x)=X(x)$ for all $x\in L'(A,B)$, then $X'=X$. This is a consequence of the following lemma. 

\begin{lemma}\label{lem:principal_support}
Let $(A,B)$ be an instance, and let $X$ be a sum of cycles with $L(X)\subseteq L'(A,B)$. There exists at most one sum of cycles $Y$ with $L(Y)\subseteq L(B)$ such that $A(X+Y)=B$, and there exists an algorithm, with running time in $O(|B|^3)$, that decides if $Y$ exists, and outputs it if it exists. 
\end{lemma}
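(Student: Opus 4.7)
The plan is to determine $Y(\ell)$ for $\ell\in L(B)$ by processing $L(B)$ in increasing order and solving, at each step, the single linear equation coming from the $\ell$-coordinate of $AY=B-AX$. Set $c_\ell := \sum_{a\in L(A),\,a\mid\ell} a A(a)$. Expanding $\ell\,(AY)(\ell)$ via \eqref{eq:mult}, and using $L(Y)\subseteq L(B)$, gives
\[
\ell\,(AY)(\ell) \;=\; \ell\,c_\ell\,Y(\ell) \;+\; \sum_{\substack{y\in L(B)\\ y\mid \ell,\,y<\ell}} y \Bigl(\sum_{\substack{a\in L(A)\\ a\lor y=\ell}} aA(a)\Bigr) Y(y),
\]
so the $\ell$-coordinate of $A(X+Y)=B$ isolates $Y(\ell)$ in terms of the $Y(y)$ with $y<\ell$ in $L(B)$ (the contributions of $X$ are absorbed in the known quantity $(AX)(\ell)$).

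For uniqueness, suppose $Y,Y'$ both satisfy the hypotheses; then $AY=AY'$. Let $\ell$ be minimal with $Y(\ell)\neq Y'(\ell)$. Since $L(Y),L(Y')\subseteq L(B)$, necessarily $\ell\in L(B)$. The $\ell$-coordinate of $AY-AY'=0$, combined with $Y(y)-Y'(y)=0$ for all $y<\ell$, reduces to $c_\ell(Y(\ell)-Y'(\ell))=0$. If $c_\ell>0$ this contradicts the choice of $\ell$. If $c_\ell=0$ then no $a\in L(A)$ divides $\ell$, so the sum in \eqref{eq:mult} is empty and both $(AX)(\ell)$ and $(AY)(\ell)$ vanish, forcing $B(\ell)=0$ and contradicting $\ell\in L(B)$.

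For the algorithm, precompute $AX$ (in $O(|B|)$ by iterating over $L(A)\times L(X)$ using \eqref{eq:mult}), then loop over $\ell\in L(B)$ in increasing order. At each step compute $c_\ell$ and the right-hand side of the displayed identity, deduce $Y(\ell)$, and abort with ``no $Y$'' if $c_\ell=0$ or if the forced $Y(\ell)$ is not a non-negative integer. After the loop, form the product $A(X+Y)$ and compare it with $B$; this last step catches any cycle of $AY$ at a length in $(L(A)\lor L(Y))\setminus L(B)$, which the $L(B)$-indexed equations do not see. Output $Y$ iff the comparison succeeds. For complexity, each of the $|L(B)|\le|B|$ iterations visits at most $|B|$ candidate divisors $y$ and, for each, at most $|L(A)|\le|B|$ values of $a$, yielding $O(|B|^2)$ per iteration and $O(|B|^3)$ in total; the final product and comparison cost $O(|B|^2)$.

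The only delicate point, already handled in the uniqueness argument, is the case $c_\ell=0$: at first sight the equation at $\ell$ fails to pin down $Y(\ell)$, but $c_\ell=0$ combined with $\ell\in L(B)$ makes the system unsolvable, so the algorithm may safely abort there.
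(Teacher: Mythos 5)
Your proof is correct, and it takes a genuinely different route from the paper's. The paper runs a greedy peeling algorithm: set $B_0=B-AX$, repeatedly take the minimum $b$ of $L(B_i)$, check $AC_b\subseteq B_i$, replace $B_i$ by $B_i-AC_b$ and add $C_b$ to $Y$; uniqueness is proved by an induction showing every admissible $Y$ contains each partial sum $Y_i$, the key step being that the minimum of $L(B_i)$ must itself lie in $L(Y-Y_i)$ because it has a divisor in $L(A)$. You instead read $A(X+Y)=B$ as a triangular linear system indexed by $L(B)$ in increasing order, with diagonal coefficient $c_\ell=\sum_{a\in L(A),\,a\mid\ell}aA(a)$; uniqueness then falls out of a minimal disagreeing coordinate once one notes that $c_\ell>0$ whenever $\ell\in L(B)$ can be matched at all, and the algorithm computes each multiplicity $Y(\ell)$ in one shot rather than cycle by cycle. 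Both arguments rest on the same underlying fact (every length of $L(B)$ occurring in a solvable equation has a divisor in $L(A)$ — your $c_\ell=0$ case is exactly the paper's divisor step), but your formulation buys a shorter uniqueness proof and a more explicit algorithm, while the paper's greedy version enforces containment at every step and therefore needs no closing check; in your version the final comparison of $A(X+Y)$ with $B$ is genuinely necessary, since the $L(B)$-indexed equations cannot see cycles created at lengths in $(L(A)\lor L(Y))\setminus L(B)$, and you correctly include it — dropping it would be the one place the argument breaks. Your complexity accounting matches the paper's $O(|B|^3)$ bound (with the same implicit assumption, also made by the paper, that $|X|\le |B|/|A|$ so that $AX$ can be formed and compared with $B$ cheaply).
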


\begin{proof}
If $AX$ is not contained in $B$, there is certainly no solution containing $X$, and if $AX=B$ we must take $Y=\emptyset$. So suppose that $AX$ is strictly contained in $B$. Let us compute a sequence $(B_0,Y_0,b_0),(B_1,Y_1,b_1),\dots,(B_k,Y_k,b_k)$ as follows. First we set $B_0=B-AX$ and  $Y_0=\emptyset$ and $b_0=0$. Then, $(B_i,Y_i,b_i)$ having been already computed, we do the following:
\begin{itemize}
\item If $B_i=\emptyset$, then we stop the algorithm. 
\item Let $b$ be the minimum of $L(B_i)$.
\item If $AC_b$ is not contained in $B_i$, then we stop the algorithm. 
\item Otherwise, we set $B_{i+1}=B_i-AC_b$ and $Y_{i+1}=Y_i+C_b$ and $b_{i+1}=b$. 
\end{itemize}
We output $Y_k$ if $B_k=\emptyset$, and ``there is no $Y$ with $L(Y)\subseteq L(B)$ such that $A(X+Y)=B$'' otherwise. Since each step can be done in $O(|B|^2)$, and since $k\leq |B|$, the total running time is in $O(|B|^3)$. It is easy to check, by induction on $i$, that 
\begin{equation}\label{eq:B_{i+1}}
B_{i+1}=B_0-AY_{i+1}.
\end{equation}
So $B_k=\emptyset$ is equivalent to $AY_k=B_0$, which is equivalent to $A(X+Y_k)=B$. Hence it remains to prove the uniqueness of $Y_k$. So let $Y$ be a sum of cycles with $L(Y)\subseteq L(B)$ such that $A(X+Y)=B$, which is equivalent to $AY=B_0$, and let us prove that $Y=Y_k$. It is sufficient to prove that $Y_i\subseteq Y$ for all $0\leq i\leq k$, and that a strict inclusion implies $i<k$. The base case $Y_0\subseteq Y$ is obvious. Suppose that $Y_i\subseteq Y$, with $0\leq i\leq k$. By \eqref{eq:B_{i+1}} we have 
\[
B_i=B_0-AY_i=AY-AY_i=A(Y-Y_i).
\]
If $B_i=\emptyset$ then $i=k$ and $Y_k=Y_i=Y$. Otherwise, since $B_i=A(Y-Y_i)$, the minimum $b$ of $L(B_i)$ has a divisor $y\in L(Y-Y_i)$. Since $y\in L(B)$ and $(A,B)$ has a solution, $y$ has a divisor $a\in L(A)$. Then $y=a\lor y\in L(B_i)$, and since $y\mid b$ we have $y=b$ by the choice of $b$. Hence $b\in L(Y-Y_i)$ and since $A(Y-Y_i)=B_i$ we have $AC_b\subseteq B_i$. Consequently, $i<k$ and $Y_{i+1}=Y_i+C_b\subseteq  Y$, completing the induction.
\end{proof}

Consequently, to compute all the solutions of an instance $(A,B)$, it is enough to enumerate the sum of cycles $X$ with $L(X)\subseteq L(A,B)\setminus L(B)$ with at most $|B|/|A|$ vertices, and to check if $X$ can be extended to a global solution with the algorithm of Lemma~\ref{lem:principal_support}. This gives the following refinement of the brute force approach on the support.  

\begin{quote}
{\bf Brute force approach on the principal support.} If $n=|B|/|A|$ is not an integer, return $\emptyset$. Otherwise, compute $L'(A,B)$. For all $1\leq m\leq n$, enumerate all the partitions of $m$ with parts in $L'(A,B)$, take the corresponding sum of cycles $X$, select those for which there exists a sum of cycles $Y$ with $Y\subseteq L(B)$ such that $X+Y$ is a solution, and return the selected sums $X+Y$. 
\end{quote}
Since $L(A,B)$ can be computed in $O(|B|^2)$, $L'(A,B)$ can be computed with the same complexity. Next, to enumerate the partitions of $1\leq m\leq n$ with parts in $L=L'(A,B)$, we proceed almost exactly as in Section~\ref{sec:support}. In such a partition, the number of occurrences of $x\in L$ is between $0$ and $\lfloor n/x\rfloor$. Hence we can enumerate all the functions $f:L\to\N$ such that $f(x)\leq 1+\lfloor n/x\rfloor$ for all $x\in L$, and select those which correspond to partitions such that $1\leq \sum_{x\in L}xf(x)\leq n$. It is easy to show, by induction on $|L|$, that the number of such functions is at most $n^{|L|-1}$. As explained in Section~\ref{sec:support}, this enumeration can be done in $O(n^{|L|})$. Then, for each partition we take the corresponding sums of cycles $X$, and we use the algorithm of Lemma \ref{lem:principal_support} to check in $O(|B|^3)$ if there exists a sum of cycles $Y$ with $Y\subseteq L(B)$ such that $X+Y$ is a solution. The algorithm is correct thanks to this lemma, and the total complexity is 
\begin{equation*}
O\left(|B|^3\left(\frac{|B|}{|A|}\right)^{|L(A,B)\setminus L(B)|}\right).
\end{equation*}

\medskip
The interesting fact is that the size of the principal support is sometimes significantly smaller than the support, and may lead to polynomial algorithms for some particular classes of instances. We illustrate this with two simple examples.  

\medskip
Suppose first that $(A,B)$ is a basic instance and that $L(A)\setminus\{1\}$ only contains primes. Then the set of solutions can be computed in $O(|B|^4)$, and there are at most $|B|/|A|$ solutions. Indeed, let $x\in L(A,B)$, $x>1$. Let $p$ be a prime that divides $x$. Since the instance is basic, $x$ divides $\lcm L(A)$, which is the product of the primes contained in $L(A)$. We deduce that $p\in L(A)$, and thus $x=p\lor x\in L(B)$. This proves that $L(A,B)\setminus L(B)\subseteq \{1\}$ and thus the brute force approach on the principal support runs in $O(|B|^4)$, and the number of solutions is at most $|B|/|A|$. 

\medskip
Unfortunately, we were not able to find a polynomial algorithm to decide if $A\mid B$ when $L(A)$ is a primitive set of prime powers, that is, $L(A)=\{p_1^{\alpha_1},\dots,p_k^{\alpha_k}\}$ for distinct primes $p_1,\dots,p_k$.    

\medskip
Suppose now that $(A,B)$ is a consistent instance and that $1\in L(B)$. By the easy lemma below, $L'(A,B)=\emptyset$, hence there is a unique solution which can be computed in $O(|B|^3)$ with the brute force approach on the principal support.

\begin{lemma} 
If $(A,B)$ is a consistent instance, then 
\[
L'(A,B)=L(A,B)\setminus \Mult(L(B)).
\]
\end{lemma}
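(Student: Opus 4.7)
The plan is to reduce the claimed equality to the statement that, for $x \in L(A,B)$, membership in $L(B)$ is equivalent to being a multiple of some element of $L(B)$. One direction is immediate: since every $n \in L(B)$ belongs to $\mathrm{Mult}(n) \subseteq \mathrm{Mult}(L(B))$, we have $L(B) \subseteq \mathrm{Mult}(L(B))$, hence
\[
L(A,B) \setminus \mathrm{Mult}(L(B)) \subseteq L(A,B) \setminus L(B) = L'(A,B).
\]

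For the reverse inclusion, I would fix $x \in L(A,B) \cap \mathrm{Mult}(L(B))$, pick some $b \in L(B)$ with $b \mid x$, and argue that $x$ itself lies in $L(B)$. The key lever is the consistency hypothesis $L(A) \lor L(A,B) = L(B)$: applied to $b$, it produces $a \in L(A)$ and $y \in L(A,B)$ with $a \lor y = b$, and in particular $a \mid b$. Transitivity of divisibility then gives $a \mid x$, so that $a \lor x = x$. Finally, by the very definition of $L(A,B)$, from $x \in L(A,B)$ and $a \in L(A)$ we get $a \lor x \in L(B)$, that is, $x \in L(B)$, as desired.

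The argument is essentially a one-line manipulation once consistency is unfolded, so I do not anticipate any genuine obstacle; the only thing to be a bit careful about is that the element $a$ extracted from $b$ via consistency is chosen so as to divide $b$ (and hence $x$), which is automatic because $a \mid a \lor y = b$.
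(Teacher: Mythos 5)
Your proof is correct and follows essentially the same route as the paper's: the easy inclusion from $L(B)\subseteq \Mult(L(B))$, and for the other direction, taking $b\in L(B)$ with $b\mid x$, using consistency to extract $a\in L(A)$ with $a\mid b$ (hence $a\mid x$), and then invoking the definition of the support to conclude $x=a\lor x\in L(B)$. No issues.
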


\begin{proof}
Suppose that there exists $x\in L(A,B)$ and $b\in L(B)$ with $b\mid x$. Since the instance is consistent, there exists $a\in L(A)$ such that $a\mid b$, and again because the instance is consistent we have $a\lor x\in L(B)$. But since $a\mid x$, we have $a\lor x=x$ and thus $x\in L(B)$. This proves that $L'(A,B)\subseteq L(A,B)\setminus \Mult(L(B))$, and the converse inclusion is obvious since $L(B)\subseteq \Mult(L(B))$. 
\end{proof}

\section{Concluding remarks}\label{sec:conclusion}

\begin{itemize}
\item 
Our main contribution is to show that, up to polynomial transformations, we can restrict the division problem for sums of cycles to basic compact and consistent instances $(A,B)$ (Lemma~\ref{lem:decompo2}). By the basic property, the exponent term $|L(A,B)|$ in the complexity of the brute force approach on the support is bounded as a function of $A$ only, namely $|L(A,B)|\leq \div(\lcm L(A))$, leading to a polynomial algorithm when $A$ is fixed. But this bound does not show that the complexity is significantly better than the (very naïve) brute force approach, running in $|B|e^{O(\sqrt{n})}$ where $n=|B|/|A|$. The problem is that it is rather difficult to bound $|L(A,B)|$ according to $n$, and a real improvement would be obtained by showing that $|L(A,B)|=o(\sqrt{n})$. Actually, considering instead the brute force approach on the principal support, showing $|L'(A,B)|=o(\sqrt{n})$ would be sufficient. Anyway, the existence of an algorithm for the division problem for sums of cycles running in $\mathrm{poly}(|B|)e^{o(\sqrt{n})}$ remains an open problem. 
\item
One way to better understand what makes the problem apparently difficult is to find polynomial algorithms for particular classes of instances. For instance, it is not difficult to prove that the problem is polynomial when $L(B)$ is a \EM{chain}, that is, when $b\mid b'$ or $b'\mid b$ for all $b,b'\in L(B)$. This leads to consider the dual situation, where $L(B)$ is an \EM{antichain} (called primitive set in number theory), that is, there is no distinct $b,b'\in L(B)$ such that $b\mid b'$. This case seems much more challenging. In that direction, we show that there is a polynomial algorithm when $L(A)$ is a set of primes (Section~\ref{sec:principal_support}), but, unfortunately, we were not able to prove the same property when $L(A)$ is an antichain of prime powers. This case is particularly interesting since it is a typical situation where the bound $|L(A,B)|\leq \div(\lcm L(A))$ gives few information on the real size of $L(A,B)$.
\item 
A principal drawback of our method is that, before using a brute force approach, the only information used to analyze an instance $(A,B)$ is the support of $A$ and $B$ and the size $n=|B|/|A|$ of potential solutions: we do not consider the multiplicities of cycles in $A$ or $B$, that is, the quantities $A(a)$ and $B(b)$ for $a\in L(A)$ and $b\in L(B)$. It is very likely that these quantities will have to be analyzed in order to make progress. To illustrate this, suppose that $(A,B)$ is a consistent instance and that all the cycles in $B$ have distinct lengths, that is, $B(b)=1$ for all $b\in L(B)$. One easily checks that the decomposition preserves this property, thus we can also suppose that $(A,B)$ is basic. Then the problem is very simple: either $C_1$ is the unique solution, or there is no solution. Indeed, suppose that $AX=B$. Let $x\in L(X)$, $a\in L(A)$ and $b=a\lor x$. If $a$ and $x$ are not coprime, that is $ax>b$, then, by~\eqref{eq:mult}, we have $B(b)\geq \frac{1}{b} aX(a)xX(x)>1$, a contradiction. Thus any $x\in L(X)$ is coprime with any $a\in L(A)$. Suppose now that there exists $x\in L(X)$ with $x>1$ and let $p$ be a prime that divides $x$. Then $L(B)$ contains a multiple of $p$, and since the instance is basic, we deduce that $p$ divides some $a\in L(A)$. But then $a$ and $x$ are not coprime, a contradiction. We deduce that $X=nC_1$ with $n=|B|/|A|$. But if $n>1$ then, for any $a\in L(A)$, we have $B(a)\geq \frac{1}{a} aX(a)X(1)=X(a)X(1)=X(a)n>1$, a contradiction. Thus $X=C_1$. 
\item
Beyond sums of cycles, that there is a polynomial algorithm for the division problem inside the whole set of functional digraphs seems a challenging problem. In the spirit of this paper, it would be at least interesting to prove that, for every fixed functional digraph $A$, there is a polynomial algorithm to decide, given any functional digraph $B$, if $A$ divides $B$. As a small step in this direction, using \cite[Corollary 14]{naquin2024factorisation}, it seems rather easy to adapt our arguments to prove that, for every sum of cycles $A$ and every functional digraph $B$, there is an algorithm to decide if $A$ divides $B$ which is polynomial when $A$ is fixed and with a complexity very similar to that given in \eqref{eq:total_run}.  
\item
We proved that, for every $\epsilon>0$ and infinitely many $n$, there exists an instance $(A,B)$ with $|A|=n$ and $|B|=n^2$ such that the number of (irreducible) solutions of $(A,B)$ and the number of irreducible factorizations of $B$ is at least \eqref{eq:super_poly}, hence super-polynomial in $n$. One may ask whether these lower bounds are far from the truth, and thus ask for non-trivial upper bounds. The only upper bound we give concerns the number of irreducible factorizations of a sum of cycles $X$ with $n$ vertices, which is at most $ne^{c_0\sqrt{n}}$. But this bound is certainly very loose since it is actually a bound on $p^\times(n)$, the number of ways of producing sums of cycles with $n$ vertices from products of sums of cycles, each distinct from $C_1$ (the order of the factors in the product being irrelevant). It therefore seems that much more precise upper bounds could be obtained. Note that, from the proof that $p^\times (n)\leq ne^{c_0\sqrt{n}}$ (Appendix \ref{an:product}), we easily deduce that we can enumerate in $e^{O({\sqrt{n}})}$ the products of sums of cycles, each distinct from $C_1$, resulting in sums of cycles with $n$ vertices. Consequently, the irreducibility decision problem for sums of cycles is sub-exponential. The arguments being very rough, huge progress seems possible for this decision problem, which is perhaps even more natural than the division problem. 
\end{itemize}

\paragraph{Acknowledgments} 
The first and last authors were partially supported by the project ANR-24-CE48-7504
``ALARICE'' and the HORIZON-MSCA-2022-SE-01 project ``ACANCOS''. The third author was partially supported by the project NCVCC of VAST.

\appendix 

\section{Irreducible and prime sums of cycles}\label{an:irr_prime}

Recall that a sum of cycles $X$ is \EM{irreducible} if, for all sums of cycles $A,B$, $X=AB$ implies $A=C_1$ or $B=C_1$. For instance, it is an easy exercise to prove that a single cycle $C_n$ is irreducible if and only if $n$ is a prime power, and this property will be used many times in the following. Let \EM{$\irred(n)$} be the number of irreducible sums of cycles with $n$ vertices, and let $\EM{\red(n)}=p(n)-\irred(n)$ be the number of reducible sums of cycles with $n$ vertices. We will prove that almost all sums of cycles are irreducible using a basic counting argument.

\begin{proposition}
$\irred(n)/p(n)\to 1$ as $n\to\infty$.
\end{proposition}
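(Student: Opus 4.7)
The plan is to count reducible sums of cycles by parameterizing them via their factorizations, and then show the count is dominated by $p(n)$ thanks to the strict sub-additivity of the square root.

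First I would observe that any reducible $X$ with $n$ vertices admits a factorization $X = AB$ with $A, B$ non-empty and different from $C_1$, hence $|A| = a$ and $|B| = n/a$ with $a \mid n$ and $2 \le a \le n/2$. Since $A$ is determined by a partition of $a$ and $B$ by a partition of $n/a$, we get the crude bound
\[
\red(n) \;\le\; \sum_{\substack{a \mid n \\ 2 \le a \le n/2}} p(a)\, p(n/a).
\]
Next I would plug in the Erdős upper bound $p(m) \le e^{c_0 \sqrt{m}}$ from \eqref{eq:Erdos_bounds} (for $m$ large enough, with a harmless multiplicative constant absorbed later) to get $p(a)p(n/a) \le e^{c_0(\sqrt{a}+\sqrt{n/a})}$.

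The key analytic step is to maximize $f(a) = \sqrt{a} + \sqrt{n/a}$ on $[2, n/2]$. A direct computation (or the convexity of $f$ on this range) shows that $f$ attains its maximum at the endpoints, giving $f(a) \le \sqrt{2} + \sqrt{n/2}$. Using the trivial bound $\tau(n) \le n$ on the number of divisors, we conclude
\[
\red(n) \;\le\; n \cdot e^{c_0\sqrt{2}} \cdot e^{c_0 \sqrt{n/2}} \;=\; e^{(c_0/\sqrt{2})\sqrt{n} + O(\ln n)}.
\]

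Finally, I would compare with the lower bound from \eqref{eq:Erdos_bounds}: for every $\epsilon > 0$ and $n$ large, $p(n) \ge e^{(c_0 - \epsilon)\sqrt{n}}$. Therefore
\[
\ln\!\left(\frac{\red(n)}{p(n)}\right) \;\le\; \left(\frac{c_0}{\sqrt{2}} - c_0 + \epsilon\right)\sqrt{n} + O(\ln n).
\]
Choosing $\epsilon < c_0(1 - 1/\sqrt{2})$ makes the coefficient of $\sqrt{n}$ strictly negative, so $\red(n)/p(n) \to 0$, which is equivalent to $\irred(n)/p(n) \to 1$. The only mildly subtle point is the endpoint analysis of $f$, but this is elementary; everything else is bookkeeping on Erdős's bounds.
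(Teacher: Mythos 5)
Your proof is correct and follows essentially the same route as the paper: bound $\red(n)$ by $\sum_{d\mid n,\,d\geq 2} p(d)\,p(n/d)$, apply Erd\H{o}s's bounds to get $\red(n)\leq e^{c_0\sqrt{n/2}+O(\ln n)}$, and compare with $p(n)\geq e^{(c_0-\epsilon)\sqrt{n}}$; the paper merely sidesteps your maximization of $\sqrt{a}+\sqrt{n/a}$ by assuming $|A|\leq|B|$ (so $d\leq\sqrt{n}$) and using monotonicity of $p$. One small caveat: $\sqrt{a}+\sqrt{n/a}$ is not convex on all of $[2,n/2]$ (it becomes concave for $a>3\sqrt{n}$), but your primary justification — the derivative shows the function decreases then increases, so its maximum on the interval is attained at an endpoint — is exactly what is needed.
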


\begin{proof}
 Let $X$ be a reducible sum of cycles with $n$ vertices. Then there exists two sums of cycles $A,B\neq C_1$ such that $AB=X$. Let $d=|A|$ and suppose, without loss, that $|A|\leq |B|$. Then $2\leq d\leq\sqrt{n}$. Consequently, 
\[
\red(n) \leq
\sum_{\substack{d\mid n \\2\leq d\leq \sqrt{n}}} p(d)p(n/d) \leq
\sum_{\substack{d\mid n \\2\leq d\leq \sqrt{n}}} p(\lceil \sqrt{n}\rceil)p(\lceil n/2\rceil) \leq 
\sqrt{n} p(\lceil \sqrt{n}\rceil)p(\lceil n/2\rceil).
\]
Let $\delta=\sqrt{1/2}$ and $0<\epsilon<\frac{1-\delta}{1+\delta}c_0$. Using \eqref{eq:Erdos_bounds}, for $n$ large enough we have 
\begin{eqnarray*}
\sqrt{n} p(\lceil \sqrt{n}\rceil)p(\lceil n/2\rceil) &\leq &  
\sqrt{n} e^{c_0\left(\sqrt{\lceil \sqrt{n}\rceil}+\sqrt{\lceil n/2\rceil}\right)} \leq 
e^{(c_0+\epsilon)\sqrt{n/2}}=e^{(c_0+\epsilon)\delta\sqrt{n}}.
\end{eqnarray*}
Using again \eqref{eq:Erdos_bounds}, we deduce that, for $n$ large enough,  
\[
\frac{\red(n)}{p(n)}\leq \frac{e^{(c_0+\epsilon)\delta\sqrt{n}}}{e^{(c_0-\epsilon)\sqrt{n}}} =
\frac{1}{e^{\left((c_0-\epsilon)-\delta(c_0+\epsilon)\right)\sqrt{n}}}
\]
By the choice of $\epsilon$, we have $c_0-\epsilon>(c_0+\epsilon)\delta$ so $\red(n)/p(n)\to 0$ as $n\to\infty$, and we are done.
\end{proof}

Recall that a functional digraph $X$ is \EM{prime} if $X\neq C_1$ and, for all functional digraphs $A,B$, $X\mid AB$ implies $X\mid A$ or $X\mid B$. While proving that the semiring of functional digraphs has no prime element is difficult \cite{seifert1971prime}, proving that the same is true for the semiring of sums of cycles is an easy exercise. 

\begin{proposition}
There is no prime element in the semiring of sums of cycles: there is no sum of cycles $X\neq C_1$ such that, for all sums of cycles $A,B$, $X\mid AB$ implies $X\mid A$ or $X\mid B$. 
\end{proposition}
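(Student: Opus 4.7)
The plan is to exhibit, for every sum of cycles $X \neq C_1$, sums of cycles $A, B$ such that $X \mid AB$ while $X \nmid A$ and $X \nmid B$; the construction depends on the shape of $X$ and splits into three cases. The first case handles $X = mC_1$ with $m \geq 2$ (only loops): I take $A = B = C_m$, so that $AB = mC_m = X \cdot C_m$ yields $X \mid AB$, and since $|A| = |B| = |X|$ together with $C_m \neq mC_1$ (as $m \geq 2$) excludes $X \mid A$ and $X \mid B$.

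The second case handles $X = C_n$ a single cycle with $n \geq 2$, subdivided according to whether $n$ is a prime power. If $n = ab$ with $\gcd(a,b) = 1$ and $a, b \geq 2$, then $X = C_a \cdot C_b$, so $A = C_a$ and $B = C_b$ work trivially, with $|A|, |B| < |X|$ precluding divisibility. If $n = p^k$, I take $A = B = C_{p^{k+1}}$: then $AB = p^{k+1}C_{p^{k+1}} = X \cdot (pC_{p^{k+1}})$, while the only two candidate quotients of size $p$, namely $pC_1$ and $C_p$, both multiply with $X$ to give $pC_{p^k} \neq C_{p^{k+1}}$, hence $X \nmid C_{p^{k+1}}$.

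The third case handles the remaining shapes, namely $X$ has at least two cycles and at least one cycle of length $\geq 2$. Letting $n = |X|$ and $L = \lcm L(X) \geq 2$, I take $A = nC_1$ and $B = C_L$, so that $AB = nC_L$; since every $\ell \in L(X)$ divides $L$, one computes $X \cdot C_L = \bigl(\sum_\ell \ell X(\ell)\bigr) C_L = nC_L$, and thus $X \mid AB$. The rejection $X \nmid A$ is immediate, because any product $XY$ inherits from $X$ a cycle of length $\geq 2$ whereas $A$ has only loops. The main obstacle is showing $X \nmid C_L$. When $L(X) = \{\ell\}$ (so $X = kC_\ell$ with $k, \ell \geq 2$), this follows from $|B| = \ell < k\ell = |X|$. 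When $|L(X)| \geq 2$, I argue by a $p$-adic analysis: assuming $XY = C_L$ forces every $y \in L(Y)$ to satisfy $\ell \lor y = L$ for each $\ell \in L(X)$, so for the non-empty set of primes $S = \{p : \exists \ell \in L(X),\ \nu_p(\ell) < \nu_p(L)\}$ every $y$ must be divisible by $M := \prod_{p \in S} p^{\nu_p(L)}$, while each $\ell \in L(X)$ satisfies $\ell \geq L/M$; combined with $X$ having at least two cycles this gives $n \geq 2L/M$, hence $M \geq 2L/n > L/n = |Y|$, contradicting $y \leq |Y|$.
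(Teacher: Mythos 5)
Your overall strategy is sound: the three cases are exhaustive, the witness products are computed correctly, and cases 1 and 3 are argued completely. There is, however, one step that does not hold as written: in the sub-case $X=C_{p^k}$ you assert that ``the only two candidate quotients of size $p$'' are $pC_1$ and $C_p$. Without further justification this is false for $p\geq 3$: every partition of $p$ yields a sum of cycles with $p$ vertices (e.g.\ $C_2+C_1$ when $p=3$), so there are many more candidates than two. The restriction to $pC_1$ and $C_p$ is correct only after invoking the support constraint \eqref{eq:divLAX}: any $Y$ with $C_{p^k}Y=C_{p^{k+1}}$ has $L(Y)\subseteq\Div(p^{k+1})$, so with $|Y|=p$ its cycle lengths lie in $\{1,p\}$. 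Even simpler, every $y\in L(Y)$ must satisfy $p^k\lor y=p^{k+1}$, which forces $p^{k+1}\mid y$, impossible since $y\leq |Y|=p$; so no quotient exists and no enumeration is needed. (Alternatively, quote the fact the paper uses, that $C_{p^{k+1}}$ is irreducible, so its only divisors are $C_1$ and itself.) With this one-line repair the proof is correct.

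Once repaired, your route genuinely differs from the paper's. The paper splits according to $\lcm L(X)$: $X=xC_1$; $X=C_{p^\alpha}$ (same witness $C_{p^{\alpha+1}}\cdot C_{p^{\alpha+1}}$ as yours, but closed via irreducibility of $C_{p^{\alpha+1}}$); $\lcm L(X)=p^\alpha$ with $|X|>p^\alpha$, using $X\cdot C_{p^\alpha}=C_{p^\alpha}\cdot |X|C_1$; and $\lcm L(X)$ not a prime power, where it refactors $|X|C_\ell$ as $|X|C_{p_1^{\alpha_1}}\cdot C_{\ell/p_1^{\alpha_1}}$ so that non-divisibility of both factors drops immediately out of \eqref{eq:divLAX}. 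You instead treat every multi-cycle $X$ with the single witness $nC_1\cdot C_L$ and exclude $X\mid C_L$ by a quantitative $p$-adic estimate (every $y\geq M$, every $\ell\geq L/M$, hence $n\geq 2L/M$ and $M>|Y|$), which is correct --- including the non-emptiness of $S$, which you assert but which indeed follows from $|L(X)|\geq 2$, since then some $\ell\in L(X)$ is a proper divisor of $L$. Your version is more uniform and, apart from the point above, avoids relying on the ``$C_n$ irreducible iff $n$ is a prime power'' fact, at the price of the counting argument; the paper's cleverer choice of factorization in the mixed case makes its verification shorter.
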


\begin{proof}
There are four cases:
\begin{itemize}
\item Suppose that $X=xC_1$ for some integer $x\geq 2$. If $x$ is not prime then $X$ is reducible, and thus $X$ is not prime. So suppose that $X=pC_1$ for some prime $p$. Then
\[
pC_1\cdot C_p=pC_p=C_p\cdot C_p.
\]
Since $C_p$ is irreducible, we deduce that $X$ is not prime.   
\item
Suppose that $X=C_{p^\alpha}$ for some prime $p$ and $\alpha\geq 1$. Then 
\[
C_{p^\alpha}\cdot pC_{p^{\alpha+1}}=p^{\alpha+1}C_{p^{\alpha+1}}=C_{p^{\alpha+1}}\cdot C_{p^{\alpha+1}}.
\] 
Since $C_{p^{\alpha+1}}$ is irreducible, we deduce that $X$ is not prime. 
\item
Suppose now that $\lcm L(X)$ is a prime power $p^\alpha$ and $|X|>p^{\alpha}$. Then 
\[
X\cdot C_{p^\alpha}=|X|C_{p^\alpha}=C_{p^\alpha}\cdot |X|C_1. 
\] 
Since $|X|>p^\alpha$ and $X$ does not divide $|X|C_1$, we deduce that $X$ is not prime. 
\item
In the other cases, $\ell=\lcm L(X)>1$ and is not a prime power. Let $\ell=p_1^{\alpha_1}p_2^{\alpha_2}\cdots p_k^{\alpha_k}$ be the prime decomposition of $\ell$; by hypothesis $k\geq 2$. We have 
\[
X\cdot C_\ell=|X|C_\ell=|X|C_{p_1^{\alpha_1}}\cdot C_{\ell/p_1^{\alpha_1}}. 
\]
By \eqref{eq:divLAX}, $X$ does not divide $|X|C_{p_1^{\alpha_1}}$ since $L(X)$ contains a multiple of $p_2^{\alpha_2}$, and it does not divide $C_{\ell/p_1^{\alpha_1}}$ since $L(X)$ contains a multiple of $p_1^{\alpha_1}$.
\end{itemize}
\end{proof}

\section{Instance with many irreducible solutions}\label{an:many_irr} 

Let \EM{$f(n)$} be the number of partitions of $n$ with parts in $\Div(n)$. In this section, we show that the instance $(C_n,nC_n)$ has exactly $f(n)$ solutions (Lemma \ref{lem:solC_nnC_n}). Hopefully, Bowman, Erd\H{o}s and Odlyzko \cite{bowman19926640} obtained an accurate estimate of $\ln f(n)$ depending on $n$ and the number of divisors of $n$: for every $\epsilon>0$ and $n$ large enough, 
\begin{equation}\label{eq:BEO}
(1+\epsilon)\left(\frac{\div(n)}{2}-1\right)\ln n\leq \ln f(n)\leq (1+\epsilon)\frac{\div(n)}{2}\ln n.
\end{equation}
By choosing $n$ with many divisors, we deduce from this that $f$ grows faster than any polynomial: for every $d$, it exists infinitely many $n$ such that $f(n)> n^d$ (Lemma \ref{lem:f(n)}). The number of solutions to the instance $(C_n,nC_n)$ is, in that sense, super-polynomial. Actually, we show something stronger: $(C_n,nC_n)$ has a super-polynomial number of irreducible solutions (Theorem \ref{thm:many_irreducibles}) and, as an immediate consequence, $nC_n$ has a super-polynomial number of irreducible factorizations (Corollary \ref{cor:many_factorization}). 

\begin{lemma}\label{lem:solC_nnC_n}
For all $n\geq 1$, we have $X\in\Sol(C_n,nCn)$ iff $|X|=n$ and $L(X)\subseteq \Div(n)$, so that 
\[
\sol(C_n,nC_n)=f(n). 
\] 
\end{lemma}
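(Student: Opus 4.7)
The plan is to chain the main definitions and basic identities from Section~\ref{sec:preliminaries}, exploiting the fact that the instance $(C_n,nC_n)$ is remarkably well-behaved: both $L(A)$ and $L(B)$ equal $\{n\}$, so the support of the instance can be computed explicitly.

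First, I would compute $|X|$ for any solution: since $|A|=n$ and $|B|=n^2$, the observation that $|X|=|B|/|A|$ for every solution gives $|X|=n$. Next, I would determine the support of the instance. Because $L(C_n)=\{n\}$, an integer $x\leq n$ belongs to $L(C_n,nC_n)$ iff $n\lor x\in L(nC_n)=\{n\}$, i.e.\ iff $x\mid n$. Hence $L(C_n,nC_n)=\Div(n)$. Applying \eqref{eq:support}, every solution $X$ satisfies $L(X)\subseteq\Div(n)$, which together with $|X|=n$ proves the ``only if'' direction.

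For the ``if'' direction, suppose $|X|=n$ and $L(X)\subseteq\Div(n)$. I would use the product formula \eqref{eq:mult} with $A=C_n$, so that only the term $a=n$ contributes:
\[
(C_nX)(\ell)=\frac{1}{\ell}\sum_{\substack{x\in L(X)\\ n\lor x=\ell}} n\,x\,X(x).
\]
For $x\in L(X)\subseteq\Div(n)$ we have $n\lor x=n$, so the sum is empty unless $\ell=n$, giving $(C_nX)(\ell)=0$ for $\ell\neq n$. For $\ell=n$, we obtain
\[
(C_nX)(n)=\frac{1}{n}\sum_{x\mid n}n\,x\,X(x)=\sum_{x\geq 1}x\,X(x)=|X|=n,
\]
using \eqref{eq:A_partition} and $L(X)\subseteq\Div(n)$. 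Therefore $C_nX=nC_n$, so $X$ is a solution.

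The counting statement $\sol(C_n,nC_n)=f(n)$ then follows immediately from the bijection recalled in Section~\ref{sec:preliminaries} between sums of cycles with $n$ vertices and partitions of $n$: the solutions are exactly the sums of cycles corresponding to partitions of $n$ whose parts all lie in $\Div(n)$. There is no real obstacle here; the whole proof is a direct unwinding of definitions, and the only mildly delicate point is noticing that the simple shape of $L(A)=L(B)=\{n\}$ collapses the product formula \eqref{eq:mult} to a single term.
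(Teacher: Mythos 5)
Your proof is correct and follows essentially the same route as the paper's: the forward direction via $|X|=|B|/|A|$ and the support constraint (the paper uses \eqref{eq:divLAX} directly where you compute $L(C_n,nC_n)=\Div(n)$ and apply \eqref{eq:support}, which is equivalent), and the converse by collapsing the product to $C_nX=|X|C_n=nC_n$ (the paper phrases this as $C_dC_n=dC_n$ for $d\mid n$ plus distributivity instead of expanding \eqref{eq:mult}). No gaps.
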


\begin{proof}
Suppose that $X$ is a sum of cycles with $n$ vertices and $L(X)\subseteq \Div(n)$. Since $C_dC_n=dC_n$ whenever $d\mid n$, we have $C_n\cdot  X =|X|C_n=n C_n$. Conversely, if $C_n\cdot X=nC_n$ then $X$ has $n$ vertices and, by $\eqref{eq:divLAX}$, we have $L(X)\subseteq \Div(L(nC_n))=\Div(n)$. 
\end{proof}

We now show that $f$ is super-polynomial. Let us defined the \EM{$k$th primorial number} as the product of the first $k$ primes. If $n$ is the $k$th primorial number, then it has $2^k$ divisors and it is well known (see e.g. \cite{crstici2006handbook}) that, for every $\epsilon>0$ and $k$ large enough, 
\begin{equation}\label{eq:primorial}
k\ln k\leq \ln n \leq (1+\epsilon)k\ln k.
\end{equation}

\begin{lemma}\label{lem:f(n)}
For every $\epsilon>0$, if $n$ is a sufficiently large primorial number, then 
\[
f(n)\geq n^{2^{\frac{\ln n}{(1+\epsilon)\ln \ln n}}}.
\]
\end{lemma}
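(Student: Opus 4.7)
The plan is to combine the lower bound of \eqref{eq:BEO} with the estimate \eqref{eq:primorial} for primorial numbers. Taking logarithms, the target inequality reads
\[
\ln f(n) \;\geq\; 2^{\frac{\ln n}{(1+\epsilon)\ln\ln n}}\,\ln n,
\]
so it suffices to produce a lower bound on $\ln f(n)$ of the form $c \cdot 2^{k-1}\ln n$ (with $c$ a positive constant) and then to show $2^{k-1}$ dominates $2^{\ln n/((1+\epsilon)\ln\ln n)}$ when $n$ is a primorial number with $k$ primes.

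First I would fix $\epsilon>0$, choose an auxiliary $\delta>0$ small (concretely, anything with $\delta<\epsilon$ whose exact value will only matter to absorb additive constants), and set $n=p_1p_2\cdots p_k$ to be the $k$-th primorial number, so that $\div(n)=2^k$. Plugging into the lower bound of \eqref{eq:BEO} yields, for $n$ (equivalently $k$) large enough,
\[
\ln f(n)\;\geq\;(1-\delta)\bigl(2^{k-1}-1\bigr)\ln n.
\]
(If one prefers to read \eqref{eq:BEO} literally with the $(1+\epsilon)$ factor on the left, the argument is unchanged, since only the multiplicative constant in front of $2^{k-1}\ln n$ is affected.)

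Next I would estimate $k$ in terms of $\ln n$ and $\ln\ln n$. From \eqref{eq:primorial} the upper bound $\ln n\leq(1+\delta)k\ln k$ gives $k\geq\ln n/((1+\delta)\ln k)$. The lower bound $k\ln k\leq\ln n$ from \eqref{eq:primorial} implies $\ln k+\ln\ln k\leq\ln\ln n$, hence $\ln k\leq\ln\ln n$. Substituting yields
\[
k\;\geq\;\frac{\ln n}{(1+\delta)\ln\ln n}.
\]
This is the key inequality, and I expect it to be the most delicate step — not because it is deep, but because one has to chase two different $\epsilon$'s and make sure the additive $-1$ coming from $2^{k-1}$ versus $2^k$ does not spoil the bound.

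Finally I would combine the pieces. From $k\geq\ln n/((1+\delta)\ln\ln n)$ we get
\[
2^{k-1}\;\geq\;2^{\frac{\ln n}{(1+\delta)\ln\ln n}-1},
\]
and since $\ln n/((1+\delta)\ln\ln n)\to\infty$, the additive $-1$ together with the factor $(1-\delta)$ in front of $2^{k-1}\ln n$ and the subtraction of $1$ inside $(2^{k-1}-1)$ can all be absorbed by replacing $\delta$ by any slightly larger value, still strictly smaller than $\epsilon$, provided $n$ is large. This yields
\[
\ln f(n)\;\geq\;2^{\frac{\ln n}{(1+\epsilon)\ln\ln n}}\ln n,
\]
which is the desired inequality after exponentiation. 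The whole argument is elementary once \eqref{eq:BEO} and \eqref{eq:primorial} are in hand; the only subtlety is the bookkeeping of the $\delta$-to-$\epsilon$ slack when passing from $2^k$ to $2^{k-1}-1$ and from $\ln n/((1+\delta)\ln k)$ to $\ln n/((1+\delta)\ln\ln n)$.
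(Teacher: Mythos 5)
Your proposal is correct and follows essentially the same route as the paper: use \eqref{eq:primorial} to get $k\geq \ln n/((1+\delta)\ln\ln n)$ via $\ln k\leq\ln\ln n$, use the lower bound of \eqref{eq:BEO} with $\div(n)=2^k$, and absorb the $-1$'s and constant factors into the slack $\delta<\epsilon$ for $n$ large. The extra bookkeeping you flag is exactly what the paper's proof does implicitly, so there is no gap.
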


\begin{proof}
Let $0<\delta<\epsilon$, and let $n$ be $k$th primorial number. By \eqref{eq:primorial}, for $k$ large enough  we have 
\[
k\geq \frac{\ln n}{(1+\delta)\ln k}\geq  \frac{\ln n}{(1+\delta)\ln \ln n}.
\]
Using \eqref{eq:BEO}, we obtain, for $k$ large enough,  
\[
\frac{\ln f(n)}{\ln n}
\geq 2^{k-1}
\geq 2^{\frac{\ln n}{(1+\delta)\ln \ln n}-1}
\geq 2^{\frac{\ln n}{(1+\epsilon)\ln \ln n}}.
\]
\end{proof}

\begin{theorem}\label{thm:many_irreducibles}
For every $\epsilon>0$, if $n$ is a sufficiently large primorial number, then the number of irreducible solutions to the instance $(C_n,nC_n)$ is at least 
\[
n^{2^{\frac{\ln n}{(1+\epsilon)\ln \ln n}}}.
\]
\end{theorem}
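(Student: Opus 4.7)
Combine Lemmas~\ref{lem:solC_nnC_n} and~\ref{lem:f(n)} with an upper bound on the number $r(n)$ of reducible solutions of $(C_n,nC_n)$. By Lemma~\ref{lem:solC_nnC_n}, solutions correspond bijectively with partitions of $n$ having parts in $\Div(n)$, whence $\sol(C_n,nC_n)=f(n)$; the theorem is then the bound $f(n)-r(n)\geq n^{2^{\ln n/((1+\epsilon)\ln\ln n)}}$ for every sufficiently large primorial $n$.

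First I would fix $\epsilon_0\in(0,\epsilon)$ and apply Lemma~\ref{lem:f(n)} with parameter $\epsilon_0$: for $n$ a sufficiently large primorial,
\[
f(n)\;\geq\; n^{2^{\ln n/((1+\epsilon_0)\ln\ln n)}}.
\]
Writing $n$ as the $k$th primorial, one has $\ln n\sim k\ln k$ and $\ln\ln n\sim\ln k$, so this lower bound has exponent asymptotic to $2^{k/(1+\epsilon_0)}$, while the theorem's target has exponent $\sim 2^{k/(1+\epsilon)}$; since $\epsilon_0<\epsilon$, the difference $2^{k/(1+\epsilon_0)}-2^{k/(1+\epsilon)}$ grows super-exponentially in $k$, so Lemma~\ref{lem:f(n)}'s lower bound already exceeds the theorem's target by a super-polynomial factor. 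Hence it is enough to establish $r(n)\leq\tfrac12 f(n)$ for $n$ sufficiently large.

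For this I would adapt the argument of the first Proposition in Appendix~\ref{an:irr_prime}: every reducible solution $X$ factors as $X=AB$ with $A,B\neq C_1$, so $|A|\cdot|B|=n$ with $|A|\in\Div(n)\setminus\{1,n\}$, and by~\eqref{eq:divLAX} both $L(A)$ and $L(B)$ are contained in $\Div(n)$. Writing $g(m)$ for the number of partitions of $m$ with parts in $\Div(n)\cap[1,m]$, overcounting ordered pairs yields
\[
r(n)\;\leq\;\sum_{\substack{d\mid n\\ 2\leq d\leq n/2}} g(d)\,g(n/d).
\]

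The main obstacle is that the naive estimate $g(m)\leq p(m)\leq e^{c_0\sqrt m}$ provided by~\eqref{eq:Erdos_bounds}, which suffices in the Appendix~\ref{an:irr_prime} proposition, is far too loose along primorials: $\sqrt{n/2}$ grows like $k^{k/2}/\sqrt 2$, whereas $\ln f(n)$ is only of order $2^{k-1}k\ln k$, so it does not entail $r(n)\leq\tfrac12 f(n)$. To get past this I would replace the Erd\H{o}s estimate by a sharper Bowman--Erd\H{o}s--Odlyzko-type bound on $g(m)$---analogous to~\eqref{eq:BEO} but applied to the restricted set $\Div(n)\cap[1,m]$---and exploit the identity $|\Div(n)\cap[1,d]|+|\Div(n)\cap[1,n/d]|=\div(n)+1$, valid for every $d\mid n$ by the involution $s\mapsto n/s$ on $\Div(n)$. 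Combined with the summation over the at most $2^k$ divisors of $n$, this should push $r(n)$ below $\tfrac12 f(n)$ and close the argument.
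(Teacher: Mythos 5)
Your reduction steps are sound as far as they go: by Lemma~\ref{lem:solC_nnC_n} the solutions of $(C_n,nC_n)$ are exactly the $f(n)$ partitions of $n$ with parts in $\Div(n)$; with $\epsilon_0<\epsilon$ the bound of Lemma~\ref{lem:f(n)} exceeds the theorem's target by a super-polynomial factor, so it would indeed suffice to prove $r(n)\le\frac{1}{2}f(n)$ for the number $r(n)$ of reducible solutions; and the overcount $r(n)\le\sum_{d\mid n,\,2\le d\le n/2}g(d)g(n/d)$ via \eqref{eq:divLAX} is correct. The genuine gap is the final step: the inequality $r(n)\le\frac{1}{2}f(n)$ is never proved, only asserted (``this should push $r(n)$ below $\frac{1}{2}f(n)$''), and it is out of reach of the estimates you have available. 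Already the single term $d=2$ contributes $g(2)g(n/2)\ge g(n/2)$, the number of partitions of $n/2$ with parts in $\Div(n)\setminus\{n\}$, and a lattice-point count of the same kind that underlies \eqref{eq:BEO} suggests that $\ln g(n/2)$ agrees with $\ln f(n)\approx\frac{\div(n)}{2}\ln n$ to leading order, the two differing only by a second-order term of size $O(\div(n))$. To conclude $g(n/2)\le\frac{1}{4}f(n)$ you would therefore need an upper bound on $g(n/2)$ and a lower bound on $f(n)$ that match to within an additive $O(\div(n))$ (indeed $O(1)$) in the logarithm, whereas \eqref{eq:BEO} only determines $\ln f(n)$ up to a multiplicative $(1+\epsilon)$, i.e.\ an additive error of order $\epsilon\,2^{k-1}k\ln k$ for the $k$th primorial --- orders of magnitude too coarse. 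Moreover \eqref{eq:BEO} is stated only for parts in $\Div(n)$ summing to $n$; no Bowman--Erd\H{o}s--Odlyzko-type bound for your restricted sets $\Div(n)\cap[1,m]$ exists in the paper or the cited reference, so the ``sharper BEO-type bound'' you invoke is itself an unproved and delicate lemma, and the involution identity on $\Div(n)$ does not supply it. Note finally that the inequality is not a soft structural fact: for $n=6$ one has $f(6)=8$ while $5$ of the $8$ solutions are reducible, so whatever validity $r(n)\le\frac{1}{2}f(n)$ has for large primorials is purely asymptotic and rests exactly on the estimates left unproved.

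For comparison, the paper's proof avoids bounding reducible solutions altogether: it exhibits an explicit family of irreducible solutions, namely the $X=C_{p_k}+Y$ with $|Y|=n-p_k$ and $L(Y)\subseteq\Div(n/p_k)$, shows each such $X$ is irreducible by a short structural argument (in any factorization $X=AB$, the fact that $C_{p_k}$ is the unique cycle of $X$ whose length is a multiple of $p_k$ forces one factor to be $C_1$), and lower-bounds the size of this family by $f(n/p_k)$, which Lemma~\ref{lem:f(n)} applied to the $(k-1)$th primorial, together with $p_k\le 2\ln n$, makes large enough. If you want to salvage your complementary-counting route you would have to develop the sharpened partition estimates yourself; the constructive route is substantially lighter and is the one to follow here.
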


\begin{proof}
Let $n=p_1p_2\cdots p_k$ be the $k$th primorial number. Let $I$ be the set of sums of cycles $X$ with $n$ vertices such that $L(X)\subseteq \Div(n)$ and such that $X$ has a unique cycle whose length is a multiple of $p_k$, and this cycle is precisely $C_{p_k}$. In other words, $X=C_{p_k}+Y$ for some sum of cycles $Y$ with $n-p_k$ vertices satisfying $L(Y)\subseteq \Div(n/p_k)$. By Lemma \ref{lem:solC_nnC_n}, 
\[
I\subseteq \Sol(C_n,nC_n).
\] 

\medskip
Let $X\in I$, and let us prove that it is irreducible. Let $A,B$ be sums of cycles such that $X=AB$. Since $X$ contains $C_{p_k}$, which is irreducible, at least one of $A,B$ contains this cycle, say $B$ without loss. If $A$ contains $C_{p_k\ell}$ for some $\ell\geq 1$, then $X=AB$ contains $C_{p_k\ell}C_{p_k}=p_kC_{p_k\ell}$, so $X$ contains at least $p_k$ cycles whose length is a multiple of $p_k$, a contradiction. If $A$ contains $C_{p_i\ell}$ for some $1\leq i<k$ and $\ell\geq 1$, then $X=AB$ contains $C_{p_i\ell}C_{p_k}=C_{p_ip_k\ell}$ and this contradicts the fact that $C_{p_k}$ is the unique cycle of $X$ whose length is a multiple of $p_k$. Hence $L(A)$ has no multiple of $p_i$ for $1\leq i\leq k$. By \eqref{eq:divLAX}, we have $L(A)\subseteq \Div(L(X))\subseteq \Div(n)$, and we deduce that $L(A)=\{1\}$. So $A=aC_1$ for some $a\geq 1$. If $a\geq 2$ then $X=AB$ contains $aC_1C_{p_k}=aC_{p_k}$ and this contradicts the fact that $X$ has a unique cycle of length $p_k$. Consequently, $A=C_1$ and thus $X$ is irreducible.  

\medskip
We now give a lower bound on $|I|$, which is exactly the number of partitions of $n-p_k$ with parts in $\Div(n/p_k)$. For $k\geq 2$ we have $n-p_k\geq n/p_k$ and thus
\[
|I|= p_{\Div(n/p_k)}(n-p_k)\geq p_{\Div(n/p_k)}(n/p_k)=f(n/p_k).
\] 
Let $0<\delta<\epsilon$. Since $p_k\leq  2 k\ln k$ (see e.g. \cite{rosser1962approximate}), using \eqref{eq:primorial} we get $p_k\leq 2\ln n$ and thus $\ln p_k\leq \ln \ln n+1$. Since $n/p_k$ is the $(k-1)$th primorial number, we deduce from Lemma~\ref{lem:f(n)} that, for $k$ large enough, 
\[
\ln |I|\geq \ln f(n/p_k)\geq 2^{\frac{\ln n-\ln\ln n-1}{(1+\delta)\ln \ln n}}(\ln n-\ln\ln n-1)\geq 2^{\frac{\ln n}{(1+\epsilon)\ln \ln n}}\ln n.
\]
\end{proof}

The number of \EM{irreducible factorizations} of a sum of cycles $X$ is the number of ways of expressing $X$ as a product of irreducible sums of cycles, the order of the factors in the product being irrelevant.

\begin{corollary}\label{cor:many_factorization}
For every $\epsilon>0$, if $n$ is a sufficiently large primorial number, then the number of irreducible factorizations of $nC_n$ is at least 
\[
n^{2^{\frac{\ln n}{(1+\epsilon)\ln \ln n}}}.
\]
\end{corollary}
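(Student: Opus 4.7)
The plan is to bootstrap the lower bound of Theorem~\ref{thm:many_irreducibles} by turning each irreducible solution $X$ of the instance $(C_n,nC_n)$ into a distinct irreducible factorization of $nC_n$. The main observation is that when $n=p_1p_2\cdots p_k$ is a primorial, the primes $p_1,\dots,p_k$ are pairwise coprime, so applying the cycle product formula $C_aC_b=(a\wedge b)C_{a\vee b}$ inductively yields
\[
C_n=C_{p_1}\cdot C_{p_2}\cdots C_{p_k}.
\]
Since each $p_i$ is prime, every $C_{p_i}$ is irreducible. Hence, starting from any irreducible solution $X$ of $(C_n,nC_n)$, we obtain the equality
\[
nC_n=C_n\cdot X=C_{p_1}\cdot C_{p_2}\cdots C_{p_k}\cdot X,
\]
which is a factorization of $nC_n$ into $k+1$ irreducible factors.

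Next I would check that distinct irreducible solutions $X$ produce distinct irreducible factorizations. Since a factorization is defined as an unordered multiset of irreducible factors, the multisets $\{C_{p_1},\dots,C_{p_k},X\}$ and $\{C_{p_1},\dots,C_{p_k},X'\}$ coincide if and only if $X=X'$ (as multisets with the same $C_{p_1},\dots,C_{p_k}$ differ exactly on the remaining slot). So the map $X\mapsto\{C_{p_1},\dots,C_{p_k},X\}$ is injective from the set of irreducible solutions of $(C_n,nC_n)$ into the set of irreducible factorizations of $nC_n$.

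Combining with Theorem~\ref{thm:many_irreducibles}, for every $\epsilon>0$ and every sufficiently large primorial $n$, the number of irreducible factorizations of $nC_n$ is at least the number of irreducible solutions of $(C_n,nC_n)$, which is at least $n^{2^{\frac{\ln n}{(1+\epsilon)\ln\ln n}}}$. No step presents a real obstacle: the only thing to verify carefully is the coprimality argument giving $C_n=C_{p_1}\cdots C_{p_k}$, which is immediate, and the injectivity of the map, which follows because only the last factor varies.
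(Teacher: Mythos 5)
Your proof is correct and takes essentially the same route as the paper's: factor $C_n=C_{p_1}\cdots C_{p_k}$ into irreducible prime-length cycles, append an irreducible solution $X$ of $(C_n,nC_n)$ to obtain an irreducible factorization of $nC_n$, and invoke Theorem~\ref{thm:many_irreducibles}. Your explicit injectivity check (distinct $X$ give distinct multisets) is a detail the paper leaves implicit; note it is also automatic here since $|X|=n$ exceeds every $p_i$, so $X$ can never coincide with one of the $C_{p_i}$.
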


\begin{proof}
Let $n=p_1p_2\cdots p_k$ be the $k$th primorial number. Then $C_n$ has a unique irreducible factorization, which is $C_{p_1} C_{p_2}\cdots C_{p_k}$. Let $X$ be an irreducible sum of cycles such that $C_n\cdot X=nC_n$. Then $C_{p_1}C_{p_2}\cdots C_{p_k}\cdot X$ is an irreducible factorization of $nC_n$, and we are done using Theorem~\ref{thm:many_irreducibles}. 
\end{proof}

\section{Products of sums of cycles resulting in $n$ vertices}\label{an:product}

For $n\geq 2$, let \EM{$P^*(n)$} be the set of sequences of integers $(a_1,a_2,\dots,a_k)$ such that $2\leq a_1\leq a_2\leq \cdots \leq a_k$ and $a_1a_2\cdots a_k=n$. Thus $\EM{p^*(n)}=|P^*(n)|$ is the number of ways of expressing $n$ as the product of positive integers, each distinct from $1$, the order of the factors in the product being irrelevant. Dodd and Mattics \cite{DM87} proved that 
\begin{equation}\label{eq:p*n}
p^*(n)\leq n.
\end{equation}

\medskip
We will consider the same concept for sums of cycles. For $n\geq 2$, let \EM{$P^{\small\times}(n)$} be the set of sequences of sums of cycles $(A_1,A_2,\dots,A_k)$ such that $2\leq |A_1|\leq |A_2|\leq \cdots \leq |A_k|$ and $|A_1A_2\cdots A_k|=n$. Thus $\EM{p^{\small\times}(n)}=|P^{\small\times}(n)|$ is the number of ways of producing sums of cycles with $n$ vertices from products of sums of cycles, each distinct from $C_1$, the order of the factors in the product being irrelevant.

\begin{proposition}
For all $n\geq 2$, 
\[
p^{\small\times}(n)\leq ne^{c_0\sqrt{n}}. 
\]
\end{proposition}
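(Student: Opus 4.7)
The plan is to parametrize each factorization in $P^{\small\times}(n)$ by its profile of sizes. Given $(A_1,\dots,A_k)\in P^{\small\times}(n)$, the sequence $(|A_1|,\dots,|A_k|)$ lies in $P^*(n)$ because $|A_1\cdots A_k|=|A_1|\cdots|A_k|=n$ with each $|A_i|\ge 2$ and the sequence non-decreasing. Conversely, for a fixed profile $(a_1,\dots,a_k)\in P^*(n)$, each $A_i$ can be chosen freely among the $p(a_i)$ sums of cycles with $a_i$ vertices, and passage to unordered multisets only decreases the count, so that
\[
p^{\small\times}(n) \;\le\; \sum_{(a_1,\dots,a_k)\in P^*(n)} \prod_{i=1}^{k} p(a_i).
\]

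The key technical lemma is that $\prod_{i=1}^k p(a_i)\le p(n)$ for every $(a_1,\dots,a_k)\in P^*(n)$. A straightforward induction on $k$ reduces this to the two-factor case $p(a)p(b)\le p(ab)$ for all $a,b\ge 2$. Combined with the Dodd--Mattics bound \eqref{eq:p*n}, namely $|P^*(n)|\le n$, and the Erd\H{o}s upper bound \eqref{eq:Erdos_bounds}, namely $p(n)\le e^{c_0\sqrt n}$ for $n$ large enough (the finitely many small $n$ being handled by direct verification), this lemma immediately yields
\[
p^{\small\times}(n) \;\le\; |P^*(n)|\cdot p(n) \;\le\; n\, e^{c_0\sqrt n}.
\]

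The main obstacle is the two-factor inequality $p(a)p(b)\le p(ab)$ for $a,b\ge 2$. An explicit combinatorial injection $P(a)\times P(b)\hookrightarrow P(ab)$ is surprisingly hard to produce---the most natural candidates (product-of-parts $\pi_i\mu_j$, blow-up-and-insert, multiset union, product of the associated sums of cycles) all fail to be injective---so one should argue analytically via the Hardy--Ramanujan asymptotic $p(m)\sim e^{c_0\sqrt m}/(4\sqrt 3\, m)$. Since $\sqrt{ab}-\sqrt a-\sqrt b$ tends to $+\infty$ once $\min(a,b)$ grows, the asymptotic forces $p(a)p(b)/p(ab)\to 0$, and the remaining finitely many pairs with both $a$ and $b$ small can be checked directly from the tabulated values of the partition function. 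Equivalently, one may invoke the Bessenrodt--Ono inequality $p(a)p(b)\le p(a+b)$ (valid for $a+b$ large enough) combined with the trivial $p(a+b)\le p(ab)$, which holds because $(a-1)(b-1)\ge 1$ gives $ab\ge a+b$ and $p$ is non-decreasing.
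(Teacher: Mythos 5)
Your outer structure is the same as the paper's: map each sequence in $P^{\small\times}(n)$ to its size profile in $P^*(n)$, invoke the Dodd--Mattics bound \eqref{eq:p*n} for the number of profiles, and bound the number of sequences sharing a fixed profile $(a_1,\dots,a_k)$ by $e^{c_0\sqrt n}$. The divergence, and the genuine gap, is in how you control $\prod_i p(a_i)$. You reduce everything to the two-factor inequality $p(a)p(b)\le p(ab)$ for $a,b\ge 2$, but neither of your two justifications holds up. The Bessenrodt--Ono theorem is the \emph{reverse} of what you quote: it states $p(a)p(b)>p(a+b)$ for $a,b\ge 2$ with $a+b\ge 10$, so the chain ``$p(a)p(b)\le p(a+b)\le p(ab)$'' fails at its first link. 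The asymptotic route is also incomplete as written: from ``$\sqrt{ab}-\sqrt a-\sqrt b\to+\infty$ once $\min(a,b)$ grows'' you conclude that only finitely many pairs remain to be checked, but bounding $\min(a,b)$ leaves the infinite families with one factor fixed (e.g.\ $a=2$, $b\to\infty$) untreated. The statement is salvageable, since $\sqrt{ab}-\sqrt a-\sqrt b=(\sqrt a-1)(\sqrt b-1)-1$ already diverges when $\max(a,b)\to\infty$ with $a,b\ge 2$; but even then, turning the Hardy--Ramanujan asymptotic into the inequality outside an \emph{explicit} finite set requires effective two-sided bounds on $p(n)$ valid for all $n$, not the ineffective lower bound of \eqref{eq:Erdos_bounds} with its ``$n$ large enough''. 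So the ``direct verification of finitely many cases'' you invoke is not actually available from what you cite, and the key lemma $\prod_i p(a_i)\le p(n)$ is left unproved.

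For contrast, the paper's proof never needs $p(a)p(b)\le p(ab)$, nor any lower bound on $p$: it bounds the per-profile count by $e^{c_0\sum_i\sqrt{a_i}}$ using only the upper bound in \eqref{eq:Erdos_bounds}, and then observes that $\sum_i\sqrt{a_i}\le\prod_i\sqrt{a_i}=\sqrt n$ whenever all $a_i\ge 4$ (sum at most product for reals $\ge 2$, applied to the $\sqrt{a_i}$), handling factors equal to $2$ or $3$ separately through the exact values $p(2)=2$ and $p(3)=3$. If you wish to keep your route, you must actually prove $p(a)p(b)\le p(ab)$ for $a,b\ge 2$ (the inequality is true, but it needs explicit bounds of the shape $e^{c_0\sqrt m}/(Cm)<p(m)<e^{c_0\sqrt m}$ valid for all $m$, which go beyond the tools quoted in this paper), or else switch to the paper's elementary case analysis, which yields the same per-profile bound $e^{c_0\sqrt n}$ with no additive-to-multiplicative comparison of partition numbers at all.
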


\begin{proof}
Let $(A_1,A_2,\dots,A_k)$ be a sequence in $P^{\small\times}(n)$. Let us call $(|A_1|,\dots,|A_k|)$ the profile sequence; it belongs to $P^*(n)$. Let $(a_1,\dots,a_k)$ be a sequence in $P^*(n)$, and let $\sigma$ be the number of sequences in $P^{\small\times}(n)$ with this profile. Hence $\sigma=p(a_1)p(a_2)\cdots p(a_k)$. We will prove that $\sigma \leq e^{c_0\sqrt{n}}$. Since, by \eqref{eq:p*n}, there are at most $n$ possible profiles, this proves the theorem. 

\medskip
Using \eqref{eq:Erdos_bounds}, we have 
\[
\sigma\leq e^{c_0\sum_{i=1}^n \sqrt{a_i}},
\]
If $a_1\geq 4$ then 
\[
\sigma\leq e^{c_0\sum_{i=1}^n \sqrt{a_i}} \leq e^{c_0\prod_{i=1}^n\sqrt{a_i}}=e^{c_0\sqrt{n}}.
\]
If $a_k\leq 3$ then, since $p(2)=2$ and $p(3)=3$, we have 
\[
\sigma=a_1a_2\cdots a_k=n\leq e^{c_0\sqrt{n}}.
\]
Otherwise, there is $1\leq \ell<k$ with $a_\ell<4\leq a_{\ell+1}$. Let $n_1=a_1a_2\dots a_\ell$ and $n_2=n/n_1$. With the previous arguments, we get 
\[
\sigma\leq n_1\cdot e^{c_0\sqrt{n_2}}.
\]
If $n_1\leq 3$ we easily check that $\sigma\leq e^{c_0\sqrt{n}}$, and if $n_1\geq 4$ then 
\[
\sigma\leq n_1\cdot e^{c_0\sqrt{n_2}}\leq e^{\sqrt{n_1}}e^{c_0\sqrt{n_2}}\leq e^{c_0(\sqrt{n_1}+\sqrt{n_2})}\leq e^{c_0\sqrt{n}}.
\]
Thus $\sigma\leq e^{c_0\sqrt{n}}$ in every case.
\end{proof}

Let $X$ be a sum of cycles with $n$ vertices, and let \EM{$\fact(X)$} be the number of irreducible factorizations of $X$. We obviously have 
\[
\fact(X)\leq p^\times(n)\leq ne^{c_0\sqrt{n}}.
\]
This proves, with very rough arguments, the assertion given in the introduction: the number of irreducible factorizations of a sum of cycles with $n$ vertices is at most $e^{O(\sqrt{n})}$.

\bibliographystyle{plain}
\bibliography{BIB}

\end{document}